\pgfplotsset{compat=1.3,compat/path replacement=1.5.1}
\tikzset{
  set arrow inside/.code={\pgfqkeys{/tikz/arrow inside}{#1}},
  set arrow inside={end/.initial=>, opt/.initial=},
  /pgf/decoration/Mark/.style={
    mark/.expanded=at position #1 with
    {
      \noexpand\arrow[\pgfkeysvalueof{/tikz/arrow inside/opt}]{\pgfkeysvalueof{/tikz/arrow inside/end}}
    }
  },
  arrow inside/.style 2 args={
    set arrow inside={#1},
    postaction={
      decorate,decoration={
        markings,Mark/.list={#2}
      }
    }
  },
}
\tikzset{partial ellipse/.style args={#1:#2:#3}{insert path={+ (#1:#3) arc (#1:#2:#3)}}}
\DeclareMathOperator\Arg{Arg}
\DeclareMathOperator\graphh{graph}
\newtheorem{remark}{Remark}
\newtheorem{definition}{Definition}
\newtheorem{theorem}{Theorem}[section]
\newtheorem{corollary}{Corollary}[section]
\newtheorem{lemma}{Lemma}[section]
\newtheorem*{remark*}{Remark}
\newcommand{\abs}[1]{\left\vert#1\right\vert}
\newcommand{\norm}[1]{\left\Vert#1\right\Vert}
\newcommand{\RR}{\mathbb R}
\newcommand{\ZZ}{\mathbb Z}
\newcommand{\TT}{\mathbb T}
\begin{document}
\title{Twist dynamics and Aubry-Mather sets around a periodically perturbed point-vortex \footnote{S.M. has been partially supported by
the project 'Geometric and numerical analysis of dynamical systems and applications to mathematical physics'
(MTM2016-76702-P), and the 'Juan de la Cierva-Formaci\'on' Programme (FJCI-2015-24917). V.O. has been partially supported by the  Spanish MINECO (FPI grant No. BES-2015-074096) and by the project MTM2017-82348-C2-1-P 
. }}

\author[1]{Stefano Mar\`o}
\author[2]{V\'\i ctor Ortega}
\affil[1]{Dipartimento di Matematica,  Universit\`a di Pisa, \protect\\ 56127 Pisa, Italy. \protect\\ E-mail: stefano.maro@unipi.it }
\affil[2]{Departamento de Matem\'atica Aplicada, Universidad de Granada, \protect\\ 18071 Granada, Spain. \protect\\ E-mail: victortega@ugr.es}

\renewcommand\Affilfont{\small}

\date{}
\maketitle

\begin{abstract}
We consider the model of a point-vortex under a periodic perturbation and give sufficient conditions for the existence of generalized quasi-periodic solutions with rotation number. The proof uses Aubry-Mather theory to obtain the existence of a family of minimal orbits of the Poincar\'e map associated to the system.
\end{abstract}

\emph{Keywords:} Perturbed vortex dynamics, Aubry-Mather sets, quasi-periodic solutions, dynamics around a singularity.

\section{Introduction}
We study the advection of a passive particle in a two-dimensional ideal fluid. This phenomena can be described by the Lagrangian version of fluid mechanics: the particle moves according to a Hamiltonian system with the streamfunction playing the role of the Hamiltonian.

Given the vorticity $\omega$ of the incompressible fluid, the streamfunction is defined as a solution of the Poisson equation $-\Delta\Psi =\omega$. From a physical point of view, a vortex is a zone of high vorticity. Mathematically, a vortex in the plane can be defined in different ways. It can be defined as a singularity of the vorticity or through a compact set of finite vorticity (vortex patch).  See \cite{Aref1,HHFH,Kozlov,MaWe,Newton} for a summary on the various definitions.

We will be concerned with a point-vortex, defined as a Dirac delta of the vorticity. Under this definition, the streamfunction is the fundamental solution of the $2$-dimensional Laplacian. These ideas were introduced in the seminal works of Helmholzt and Kirchoff in the
XIXth century and nowadays point-vortices are studied as a branch of Fluid Mechanics with deep connections with Celestial Mechanics and Hamiltonian systems.

A point-vortex induces a streamfunction $\Psi_0=\frac{\Gamma}{4\pi}\ln (x^2+y^2)$ where $\Gamma$ is the circulation (or strength) of the vortex and, up to a rescaling of unit, it can be set $\Gamma=2\pi$. The solution of the corresponding Hamiltonian system are circular path around the orgin and describe the trajectories of a passive particle under the influence of the vortex. The frequency of rotation is inversely proportional to the radius of the path and tends to infinity as the radius tends to zero.

We will study how this integrable dynamics is affected by the superposition of an external periodic time dependent streamfunction $p(t,x,y)$. 
More precisely, we consider the Hamiltonian
\begin{equation}\label{Ham}
\Psi(t,x,y) = \frac{1}{2}\ln(x^2+y^2) + p(t,x,y),
\end{equation}
and the associated Hamiltonian system 
\begin{equation}\label{uno}
\left\lbrace \begin{array}{lr}  \dot{x}= \partial_y \Psi (t,x,y)  &  \\ & \qquad(x,y)\in \, \mathcal{U}\setminus \left\lbrace 0 \right\rbrace,  \\  \dot{y}=- \partial_x \Psi (t,x,y)  &\end{array} \right.
\end{equation}
defined in a neighborhood $\mathcal{U}$ of the origin.

Physically, system \eqref{uno} can be interpreted to model the advection of a particle under the action of a steady vortex placed at the origin and a periodic time dependent background flow.

The dynamics of advected particles in non-stationary flows including vortices have been intensively studied from different perspectives \cite{BBPV,BCF,KTH,PC,RLW,TDH}. In particular, numerical tests suggest the presence of complex dynamics.

From an analytical point of view, in \cite{OOT} the authors studied the stability properties of the vortex. More precisely, they proved that if the external streamfunction is analytic, then KAM theory applies and invariant curves of the Poincar\'e map exist close to the singularity. As a byproduct, there exist quasiperiodic solutions  of the Hamiltonian systems with (sufficiently large) Diophantine frequencies. These can be seen as a reminiscent of the trajectories of the unperturbed Hamiltonian with Diophantine frequency.

In this paper, we will prove that, close to the singularity, quasi-periodic solutions exist for all frequency sufficiently large. Actually, our solutions will be a generalization of standard quasi-periodic solutions and in case of commensurable frequencies, we will get periodic solutions. These solutions exist also when KAM theory cannot be applied. Indeed, we will require very low regularity that prevents standard KAM theory from being applied.

To prove our result, we will apply a suitable version of Aubry-Mather theory \cite{Aubry,Mathertop} to the Poincar\'e map of system \eqref{uno}.  A similar scheme have been used to describe the dynamics of different systems \cite{Maro1,Maro3,Ortega96,shi,wang}.

For each sufficiently large real number $\alpha$, we will prove the existence of an invariant set $\mathcal{M}_\alpha$ (called Aubry-Mather set) with very interesting dynamical properties, among them each orbit in $\mathcal{M}_\alpha$ has rotation number $\alpha$. For irrational rotation numbers, the corresponding Aubry-Mather sets are either curves or a Cantor sets. Solutions of system \eqref{uno} with initial conditions in this set will be our generalized-quasi periodic solutions. In the rational case, the Aubry-Mather sets contain a periodic orbit.

In suitable variables, the Poincar\'e map will be an exact symplectic twist map of the cylinder. However, it will not be defined on the whole cylinder. Hence we cannot apply directly the result of Mather and we will prove an adapted version to this situation.

To apply our theorem, we will need to prove that the Poincar\'e map is exact symplectic and twist. The first property comes from the Hamiltonian character of the system. The twist condition is more delicate and relies on the behavior of the variational equation. We will give a proof following a perturbative approach. Here, we will ask that the perturbation has the origin as a zero of order $4$.

From the point of view of dynamics of symplectic diffeomorphisms, we will describe some aspects of the dynamics around a singularity. In the integrable case, the flow can be continuously extended to the singularity, defining it as a fixed point. However, this extension is not $\mathcal{C}^1$. In the perturbed case, in general is not even possible to guarantee continuity of this extension. Since the flow is not regular, all the results coming from the theory of elliptic fixed points and transformation to Birkhoff normal form cannot be applied directly.
We will overcome the problem of the singularity performing a change of variable that sends the singularity at infinity and has a regularizing effect. At this stage, the assumption of having the zero of order $4$ in the perturbation play a fundamental role.

The paper is organized as follows. In Section \ref{sec:state}  we state the problem and the main result. The definition of generalized quasi-periodic solution will be given in this section. In Section \ref{sec:preest} we introduce the regularizing variables and the Poincar\'e map together with some preliminary estimates. In Section \ref{sec:AM} we state and prove the suitable version of the Aubry-Mather theorem. In Section \ref{sec:exact} it is proved the property of exact symplectic and Section \ref{sec:twist} is dedicated to the proof of the twist property. The proof of the main result will be given in Section \ref{sec:proof}. Finally, we draw some conclusions in Section \ref{sec:conclu}. Some technical lemmas are relegated to the Appendix.

\section{Statement of the problem and main result}
\label{sec:state}

Let us consider the perturbed Hamiltonian system given by \eqref{Ham}-\eqref{uno}.
%
%
We suppose that the perturbation $p(t,x,y)$ belongs to the following class

\begin{definition}\label{defr}
   Given $\varepsilon > 0$, consider the open disk around the origin  $\mathbb{D}_{\varepsilon} = \left\lbrace (x,y)\in \mathbb{R}^2: x^2+y^2 < \varepsilon^2 \right\rbrace$. We say that a continuous functions $f : \RR \times \mathbb{D}_{\varepsilon}\longrightarrow  \RR$ belongs to the class $\mathcal{R}^k_{\varepsilon}$ if 

 \begin{itemize}
 \item [\textbf{i)}]$f(t+1,x,y) = f(t,x,y)$.
 \item [\textbf{ii)}]$f \in \mathcal{C}^{0,k}(\RR \times \mathbb{D}_{\varepsilon})$ i.e. $f$ is $\mathcal{C}^k$ w.r.t the spatial variables $(x,y)$ and all the partial derivatives are continuous w.r.t. $(t,x,y)$.
  
 \end{itemize}
\end{definition}

%

Now, given any $N\in\mathbb{N}$  we give the notion of \textit{zero of order $N$} of a function $f\in\mathcal{R}^k_{\varepsilon}$. The following definition will be of particular interest in the case $N>k$. 
\begin{definition}
  \label{deforder}
   Given a function $f \in \mathcal{R}^k_{\varepsilon}$ we say that the origin is a \textit{zero of order $N$} if there exist $T_N, \tilde{f}  \in \mathcal{C}^{0,k}(\RR \times \mathbb{D}_{\varepsilon})$ such that,
    \[ f(t,x,y) = T_N(t,x,y)+\tilde{f}(t,x,y)
    \]
and satisfying the following properties. 
 \begin{itemize}
 	\item  $T_N(t,x,y) =  \underset{i+j=N}{\sum} \alpha_{i,j}(t)x^i y^j$  is a homogeneous polynomial of degree $N$ with $\mathcal{C}^1$ coefficients,
 	
 	\item there exists a constant $C$ such that, for all $(t,x,y)\in\RR\times\mathbb{D}_\varepsilon$,   
     \begin{align*}
    & |\tilde{f}(t,x,y)| \leq C(|x|^{N+1}+|y|^{N+1}),\\ &
|\partial^{(m)} \tilde{f}(t,x,y)| \leq C(|x|^{N-m+1}+|y|^{N-m+1})\quad \mbox{for} \quad 1\leq m\leq k.
	\end{align*}
    \end{itemize}
\end{definition}

 Our result gives the existence of particular families of solutions: periodic and quasi-periodic solutions in a generalized sense. To define them, given a solution $(x(t),y(t))$ of \eqref{uno}, consider the functions
\begin{equation}\label{defrt}
r(t)=\frac{1}{2(x(t)^2+y(t)^2)},
 \quad \theta(t) = -\Arg [x(t)+i y(t)],
 \end{equation}
having a relation with the standard polar coordinates. Actually $\theta(t)$  represents the angle in the clockwise sense, while $r(t)$ is, up to a scaling constant, the inverse of the square of the radius.

\begin{definition}
 We say that the solution $(x(t),y(t))$, defined for $t\in\mathbb{R}$
    \begin{itemize}
    \item is \textit{non-singular }if
      \[
      \sup_{t\in\RR}r(t)<\infty;
      \]
    \item is \textit{bounded} if there exists $A>0$ such that 
      \[
      \inf_{t\in\RR}r(t)>A;
      \]
      \item has \textit{monotone argument} if $\theta(t)$ is monotone;
  \item has \textit{rotation number} $\alpha$ if
    \[
   \frac{1}{2\pi} \lim_{t\to\infty}\frac{\theta(t)}{t}=\alpha.
    \]
    \end{itemize}
\end{definition}

\begin{remark} \em{
A non-singular bounded solution with monotone argument rotates clockwise in a closed annulus around the origin. Moreover, the rotation number represents the average angular velocity. 
}
\end{remark}

We are ready to state the main result.

\begin{theorem}\label{Theo}
 Suppose that $p\in\mathcal{R}^3_{\varepsilon}$ is such that the origin is a zero of order $4$.

  Then there exists $\bar{\alpha}$ sufficiently large such that for every $\alpha>\bar{\alpha}$ 
  there exist a family of non-singular, bounded solutions 
  \[
  \left\{ (x(t),y(t))_\xi\right\}_{\xi\in\RR}
  \]
  with monotone argument and rotation number $\alpha$. These solutions are such that the related functions $r(t),\theta(t)$ defined in \eqref{defrt} satisfy, for every $t,\xi\in\RR$,
\begin{align}\label{contheo}
 & (r(t),\theta(t))_{\xi+2\pi} =(r(t),\theta(t))_\xi+(0,2\pi)  \\ &
%
\label{contheo2}
(r(t+1),\theta(t+1))_{\xi}=(r(t),\theta(t))_{\xi+2\pi\alpha}.
\end{align}
\end{theorem}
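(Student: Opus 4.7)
The plan is to reduce the theorem to an application of the variant of Aubry--Mather theory announced in Section~\ref{sec:AM}. First I would introduce the regularizing change of coordinates $(x,y)\mapsto(r,\theta)$ given in \eqref{defrt}, which sends the singularity at the origin to $r=+\infty$ and conjugates the unperturbed vortex flow to the trivial shear $\dot r=0$, $\dot\theta=2r$ on the half-cylinder $\{r>R_0\}\times\mathbb{T}$. Because $p\in\mathcal{R}^3_\varepsilon$ vanishes to order $4$ at the origin, the perturbation expressed in the new variables inherits a decay of order $O(r^{-2})$ together with its derivatives up to order~$3$, so the flow extends to large $r$ and the Poincaré map $\mathcal{P}$ at time $1$ is well defined on a region of the form $\{r>R\}$ for some $R\gg 1$. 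The lift $\widetilde{\mathcal{P}}$ to $\{r>R\}\times\mathbb{R}$ is a small perturbation of the integrable shear $(r,\theta)\mapsto(r,\theta+2r)$.

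Next I would verify, invoking Sections~\ref{sec:exact} and~\ref{sec:twist}, the two structural hypotheses needed by the Aubry--Mather machinery: that $\widetilde{\mathcal{P}}$ is exact symplectic (inherited from the Hamiltonian character of \eqref{uno} and a computation of the Liouville form in the $(r,\theta)$ chart) and that it is a monotone twist map when $r$ is large enough (obtained perturbatively from the variational equation, where the unperturbed twist $\partial\theta_1/\partial r_0=2$ dominates the $O(r^{-3})$ correction coming from $\tilde p$). Having these in place, I would then apply the adapted Mather theorem of Section~\ref{sec:AM}, which is tailored to exact symplectic twist maps defined only on a half-cylinder $\{r>R\}\times\mathbb{T}$: for every rotation number $\alpha>\bar\alpha$ (chosen so that the corresponding invariant set lies in the domain of $\widetilde{\mathcal{P}}$) it produces a nonempty, compact, $\widetilde{\mathcal{P}}$--invariant Aubry--Mather set $\mathcal{M}_\alpha$, totally ordered in $\theta$, on which the dynamics is conjugate to the rigid rotation $\xi\mapsto\xi+2\pi\alpha$ in a circle parameter $\xi$.

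Finally I would translate the properties of $\mathcal{M}_\alpha$ back into statements about the solutions of \eqref{uno}. Parametrize the orbit through the point of $\mathcal{M}_\alpha$ corresponding to $\xi$ by $(x(t),y(t))_\xi$, with $(r(t),\theta(t))_\xi$ the associated functions in \eqref{defrt}. The fact that $\mathcal{M}_\alpha$ lies in a compact annulus of the half-cylinder, together with the monotonicity of $\theta$ inherited from the twist property and the positivity of $\dot\theta$ for $r$ large, gives precisely the non-singular, bounded, monotone-argument conclusions and the rotation number $\alpha$. Relation \eqref{contheo} is the statement that $\xi\mapsto(r(t),\theta(t))_\xi$ is the $2\pi$-periodic parametrization of a lifted invariant curve, while \eqref{contheo2} is the translation of the conjugacy $\widetilde{\mathcal{P}}|_{\mathcal{M}_\alpha}:\xi\mapsto\xi+2\pi\alpha$ using $1$-periodicity of $p$ in $t$.

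The step I expect to be most delicate is the twist estimate on the half-cylinder: one has to integrate the variational equation over the full period $[0,1]$ while $r(t)$ may vary, and control the cumulative $O(r^{-3})$ perturbation uniformly in the strip $\{r>R\}$, which is what ultimately fixes the threshold $\bar\alpha$. A secondary source of care is that $\widetilde{\mathcal{P}}$ is not defined on the whole cylinder, so one must check that the minimizing orbits produced by the adapted Mather theorem never escape the domain; this is handled by choosing $\bar\alpha$ so large that the candidate minimal configurations stay in a region where $\widetilde{\mathcal{P}}$ and its iterates are well defined.
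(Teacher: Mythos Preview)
Your overall architecture coincides with the paper's: pass to the $(r,\theta)$ variables of \eqref{defrt}, show the Poincar\'e map is an exact symplectic twist map on a half-cylinder (Sections~\ref{sec:exact} and~\ref{sec:twist}), apply the adapted Aubry--Mather theorem of Section~\ref{sec:AM} via Corollary~\ref{MatherCor}, and read off \eqref{contheo}--\eqref{contheo2} and the rotation number from the functions $\phi,\eta$. That part is fine.

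The one place where your sketch would fail as written is the twist estimate. You say the variational equation deviates from the integrable one by an ``$O(r^{-3})$ correction coming from $\tilde p$'', which suggests a straightforward smallness argument. That is not what happens. In the splitting $p=T_4+\tilde p$, the homogeneous quartic $T_4$ produces in the Jacobian of the vector field an entry $b_{12}(t,\theta)=\partial_{\theta\theta}[T_4(t,\cos\theta,-\sin\theta)]$ which is of order one and does \emph{not} decay as $r_0\to\infty$; only the remainder matrix $C$ coming from $\tilde p$ and from $G$ tends to zero uniformly (Lemma~\ref{lemsplit}). So a naive bound on the variational equation would not give $\partial\theta_1/\partial r_0\to 2$. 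The paper's mechanism is instead an oscillatory-integral/Riemann--Lebesgue argument: because $\theta(t)\approx\theta_0+2r_0t$, the bounded term $b_{12}$ oscillates with frequency $\sim r_0$ and therefore tends to zero only in the weak$^*$ sense in $\mathcal L^\infty([0,1])$ (Lemma~\ref{RL}), and one then invokes continuity of the fundamental solution under weak$^*$ convergence of the coefficients (Lemma~\ref{lemconv}) to conclude. If you rewrite your twist paragraph to reflect this---split off the $T_4$ contribution, handle it by Riemann--Lebesgue, and handle the rest by uniform smallness---your plan matches the paper's proof.
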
   

\begin{remark}\label{rem2} \em{
The possible crossings of each solution occur in a determined way. Actually, if $(x(t_1),y(t_1))_\xi = (x(t_2),y(t_2))_\xi$ for $t_1 -t_2 \notin \ZZ$, since $1$ is the minimal period of the perturbation, $(\dot{x}(t_1),\dot{y}(t_1))_\xi \neq (\dot{x}(t_2),\dot{y}(t_2))_\xi$.

    If $\alpha=s/q\in\mathbb{Q}$, then the solutions satisfy
    \[
 (r(t+q),\theta(t+q))_{\xi} =(r(t),\theta(t))_\xi+(0,2\pi s)
    \]
    and are said $(s,q)$-{\it periodic}. These solutions make $s$ revolutions around the singularity in time $q$. 
If $\alpha\in\RR\setminus\mathbb{Q}$, solutions satisfying \eqref{contheo}-\eqref{contheo2} can be seen as generalized quasi periodic. Actually, consider the function
  \[
  \Phi_\xi(a,b)= (r(a),\theta(a))_{b-2\pi\alpha a+\xi}.
  \]
  This function is doubly-periodic in the sense that
  \begin{align*}
    \Phi_\xi(a+1,b)&= (r(a+1),\theta(a+1))_{b-2\pi\alpha a+\xi-2\pi\alpha}=\Phi_\xi(a,b), \\
    \Phi_\xi(a,b+2\pi)&=(r(a),\theta(a))_{b-2\pi\alpha a+\xi+2\pi} =\Phi_\xi(a,b)+(0,2\pi).
  \end{align*}
  and
  $\Phi_\xi(t,2\pi\alpha t)=(r(t),\theta(t))_{\xi}$.
  If the function $\xi\mapsto\Phi_\xi$ is continuous, then these solutions are classical quasi-periodic solutions with frequencies $(1,\alpha)$ in the sense of \cite{SM} (see also \cite{OrtegaLisboa}). We will not guarantee the continuity, however, the function $\xi\mapsto\Phi_\xi$ will have at most jump discontinuities and if $\xi$ is a point of continuity then so are $\xi+2\pi\alpha,\xi+2\pi$. Finally, the set $Cl\{ (x(0),y(0))_\xi : \xi\in \RR \}$ is either a curve or a Cantor set, recovering the classical definition of quasi-periodic solution in the case of having an invariant curve. 
  }
\end{remark}

\section{Some estimates on the solutions and the Poincar\'e map } \label{sec:preest}

Let us consider system \eqref{uno} and, following section $4.1$ of \cite{OOT}, consider the change of variables $(x,y)=\varphi(\theta,r)$
defined by
\[
x = \frac{\cos\theta}{\sqrt{2r}}, \qquad y = -\frac{\sin\theta}{\sqrt{2r}}.
\]
These variables comes from applying first the Kelvin transform and subsequently the change to symplectic polar coordinates.
System \eqref{uno} transforms into
\begin{equation} \label{tres}
  \left\lbrace \begin{array}{l}
    \dot{r}= 4r^2 \,  \partial_{\theta} H (t,r,\theta)
    \\  \dot{\theta}=- 4r^2 \, \partial_r H (t,r,\theta)
  \end{array} \right.
\end{equation}
where $ H (t,r,\theta)= - \frac{1}{2}\ln(2r) + h (t,r,\theta)\: $ and $\: h (t,r,\theta)= p\left(t, \frac{\cos\theta}{\sqrt{2r}},-\frac{\sin\theta}{\sqrt{2r}} \right)$.
System \eqref{tres} is still a periodic planar Hamiltonian system with symplectic form $\tilde{\lambda}=\frac{1}{4r^2}\,\mathrm{d}r \wedge \mathrm{d}\theta$.
Moreover, the change of variables $\varphi$ transforms the domain $\RR\times\mathbb{D}_{\varepsilon}$ into the domain $\RR\times\mathcal{D}$ with
\begin{equation*}
\mathcal{D} = \left\lbrace (r,\theta) \in ]r_\ast,\infty[\times\TT \: : \: r_\ast=\frac{1}{2\varepsilon^2} \right\rbrace. 
\end{equation*}



\noindent Let us write the Cauchy problem associated to system \eqref{tres}, in the following form:

\begin{equation} \label{cuatro}
\left\lbrace \begin{array}{l}  \dot{r}= F(t,r,\theta) , \\  \dot{\theta}=2r + G(t,r,\theta) , \\ (r(0), \theta(0))=(r_0,\theta_0) \end{array} \right.
\end{equation}
where,
\begin{align}
  \nonumber
  F(t,r, \theta) &=4r^2 \partial_{\theta} \left[p\left(t, \frac{\cos \theta}{\sqrt{2r}}, \frac{-\sin \theta}{\sqrt{2r}}\right)\right],
  \\
  \label{defg}
G(t,r, \theta) &= -4r^2 \partial_{r} \left[p\left(t, \frac{\cos \theta}{\sqrt{2r}}, \frac{-\sin \theta}{\sqrt{2r}}\right)\right].
\end{align}
 Since $p\in\mathcal{R}^3_\varepsilon$, the vector field in \eqref{cuatro} is continuous and $\mathcal{C}^2$ in the spatial variables.
This guarantees existence and uniqueness of the solution.
%
\begin{remark}
\em{
  The change of variables $\varphi$ has the effect to transform the phase space from the plane to the cylinder. The singularity is moved from the origin to $r\rightarrow\infty$. In this sense, the change of variables has a regularizing effect since the functions $F,G$ in \eqref{cuatro} are bounded for $r\rightarrow\infty$. The fact that the origin is a zero of order $4$ plays a fundamental role in this discussion. See estimate \eqref{bound33} in the following Lemma \ref{evosol} for more details.
  }
\end{remark}

Since the domain $\mathcal{D}$ is not invariant, we need to control the growth of the solutions. For this purpose, given $a>r_*$ we introduce the set 
\[
\Sigma (a) = ]a,\infty[\times\TT \subset \mathcal{D}
    \]
%
and prove the following lemma, whose meaning is illustrated in Fig. \ref{domains}.

\begin{figure}
\centering \hspace*{-0.1cm}
  \begin{tikzpicture}[xscale=2.4,yscale=1.8]

\draw[line width=0.43mm,dashed]   (0,-0.6) --(0,0);

\draw[line width=0.43mm,dashed] (1.5,-0.6) -- (1.5,0);

\draw[line width=0.45mm,-latex]   (0,0)  node [left] at (0,0) {\scalebox{0.9}[0.7]{$r_\ast$}}  -- (0,2) node[anchor=west] {$\emph{r}$};

\draw[line width=0.45mm] (1.5,0) -- (1.5,2);

\draw[thick, -latex] (0.75,0) [partial ellipse=-180:-110:0.75cm and 0.15cm]
node[anchor=north ] {\emph{$\theta$}};
\draw[thick] (0.75,0) [partial ellipse=-115:0:0.75cm and 0.15cm];
\draw[thick,dashed] (0.75,0) [partial ellipse=0:180:0.75cm and 0.15cm];


\draw[rotate=0,cyan,line width=0.4mm] (0.75,0.9) [partial ellipse=-180:0:0.75cm and 0.15cm];
\draw[rotate=0,cyan,line width=0.4mm,dashed] (0.75,0.9) [partial ellipse=0:180:0.75cm and 0.15cm]
node [left,cyan] at (0,0.9) {\scalebox{0.7}[0.7]{$a_\ast$}};

%
%

\draw[thick,->,red] (0.19,1.3) 
 to [out=45,in=160] (0.75,1.45);
\draw[thick,red] (0.75,1.45) to [out=-30,in=60] (0.47,1.05) to [out=240,in=120]  (0.9,0.12) 
to [out=-30,in=210]  (1.25,0.3);

 \fill[red] (0.19,1.3) circle[radius=0.3mm]
 node[below ,red] {\scalebox{0.7}[0.7]{$(r_0,\theta_0)_1$}};

 \fill[red] (1.25,0.3) circle[radius=0.3mm]
 node[above,red] {\scalebox{0.6}[0.6]{$(r(1),\theta(1))_1$}};

\draw[thick,->,orange] (0.8,0.65) 
 to [out=45,in=170] (1.2,1);
\draw[thick,orange] (1.2,1) to
[out=-30,in=180] (0.27,-0.4) to [out=0,in=-145]  (0.3,0.3);

 \fill[orange] (0.8,0.65) circle[radius=0.3mm]
 node[below,orange] {\scalebox{0.7}[0.7]{$(r_0,\theta_0)_2$}};

 \fill[orange] (0.3,0.3) circle[radius=0.3mm]
 node[ above,orange] {\scalebox{0.6}[0.6]{$(r(1),\theta(1))_2$}};

\end{tikzpicture}
  \caption{
Domains and evolution of two solutions over a period. $(r(t),\theta(t))_1$ represents the solution with $(r_0,\theta_0)_1\in\Sigma(a_\ast)$ and $(r(t),\theta(t))_2$ represents the solution with $(r_0,\theta_0)_2\notin \Sigma(a_\ast)$. Note that the solution $(r(t),\theta(t))_1$ remains in the domain $\mathcal{D}$. 
  }
\label{domains}
\end{figure}
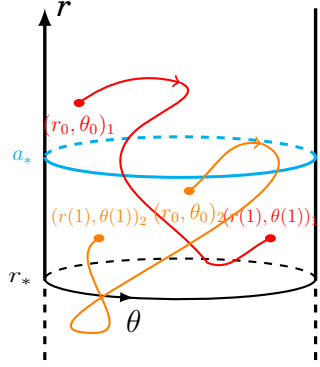

\begin{lemma}\label{evosol}

  Let us assume that the origin is a zero of order $4$ of the function  $p\in \mathcal{R}^3_{\varepsilon}$.
 Then there exists $a_\ast > r_\ast$ such that if $(r_0, \theta_0)\in \Sigma(a_\ast)$, the corresponding solution of \eqref{cuatro} is well defined on $t\in [0,1]$ and $ (r(t),\theta(t)) \in \mathcal{D} $ for all $t\in [0,1]$.
Moreover, the following estimate holds
\begin{equation}\label{lemma1}
\abs{r(t)- r_0} + \abs{\theta(t) - \theta_0 - 2r_0t} \leq K \:\quad \textnormal{if} \:\quad t\in [0,1]
\end{equation}
for some $K>0$.

\end{lemma}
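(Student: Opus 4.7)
The plan is to exploit the order-$4$ vanishing of $p$ to derive uniform bounds on the right-hand side of \eqref{cuatro}, namely $|F|\leq M_F$ and $|G|\leq M_G/r$ for $r>r_\ast$, uniformly in $t$ and $\theta$. Once these bounds are available, both the assertion that the solution stays in $\mathcal D$ on $[0,1]$ and the estimate \eqref{lemma1} follow from a standard continuation plus elementary integration argument.

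First, I would decompose $p=T_4+\tilde p$ according to Definition \ref{deforder} and compose with the change of variables $\varphi$. Because $|x|,|y|\leq(2r)^{-1/2}$ and $T_4$ is homogeneous of degree $4$, one sees immediately that $T_4\circ\varphi=A(t,\theta)/(4r^2)$ for some smooth bounded $A$, whence $\partial_\theta(T_4\circ\varphi)=O(r^{-2})$ and $\partial_r(T_4\circ\varphi)=O(r^{-3})$. For the remainder, using $\partial_\theta x,\partial_\theta y=O(r^{-1/2})$, $\partial_r x,\partial_r y=O(r^{-3/2})$ together with the pointwise bounds on $\tilde p$ and its first partials (which vanish to order $5$ and $4$, respectively) and the chain rule, one obtains $\partial_\theta(\tilde p\circ\varphi)=O(r^{-5/2})$ and $\partial_r(\tilde p\circ\varphi)=O(r^{-7/2})$. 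Multiplying by $4r^2$ yields
\[
|F(t,r,\theta)|\leq M_F,\qquad |G(t,r,\theta)|\leq \frac{M_G}{r}
\]
for every $(t,\theta)$ and every $r>r_\ast$, with constants $M_F,M_G>0$ depending only on $p$.

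Armed with these bounds I would set $a_\ast:=r_\ast+M_F+1$ and fix $(r_0,\theta_0)\in\Sigma(a_\ast)$. Local existence and uniqueness give a solution on a maximal interval $[0,\tau)\subseteq[0,1]$ with values in $\mathcal D$. Since $|\dot r|\leq M_F$, as long as the solution stays in $\mathcal D$ one has $r(t)\geq r_0-M_F\geq r_\ast+1$, which rules out both escape through the boundary $\{r=r_\ast\}$ and blow-up of $r$; a standard continuation argument then forces $\tau=1$ and $(r(t),\theta(t))\in\mathcal D$ for all $t\in[0,1]$. Integrating the second equation of \eqref{cuatro} one writes $\theta(t)-\theta_0-2r_0 t=\int_0^t 2(r(s)-r_0)\,ds+\int_0^t G(s,r(s),\theta(s))\,ds$, whose absolute value is bounded by $2M_F+M_G/a_\ast$; combined with $|r(t)-r_0|\leq M_F$ this gives \eqref{lemma1} with $K:=3M_F+M_G/a_\ast$.

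I expect the only delicate step to be the first one. It is precisely there that the hypothesis of a \emph{zero of order $4$} is unavoidable: the growth factor $4r^2$ appearing in front of the derivatives of $p$ in $F$ and $G$ must be cancelled by the vanishing of $p$ at the singularity, and a zero of order $3$ or less would fail to produce a bounded $F$, which would destroy the continuation scheme and the subsequent twist analysis of Section \ref{sec:twist}. The rest of the argument is elementary book-keeping once the uniform estimates on $F$ and $G$ are in place.
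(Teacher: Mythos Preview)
Your proof is correct and follows essentially the same route as the paper: derive uniform bounds $|F|\leq C_1$ and $r|G|\leq C_1$ on $\mathcal{D}$ from the order-$4$ vanishing of $p$, then run a continuation argument on $r$ and finish with an elementary integration for $\theta$. The only cosmetic difference is that the paper obtains the bound on the first derivatives more directly via $|\partial^{(1)}p(t,x,y)|\leq C(|x|^3+|y|^3)$ without splitting $p=T_4+\tilde p$, but the substance and the resulting constants are the same.
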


\begin{proof}

Since the origin is a zero of order $4$ of $p$, there exists a constant $C>0$ such that

\begin{equation*}
\abs{\partial^{(1)} p \, (t,x,y)} \leq C ( \abs{x}^3 + \abs{y}^3) \:\quad  \mbox{in }  \mathbb{R}\times\mathbb{D}_\varepsilon.
\end{equation*}
Then, from the definition of $F$and $G$, we have 
\begin{equation}\label{bound33}
\abs{F(t,r,\theta)} + \abs{2r}\abs{G(t,r,\theta)} \leq C_1
\end{equation} for any $t\in \RR$ and $(r,\theta) \in \mathcal{D}$.

We shall prove that $a_*=r_\ast+C_1$ satisfies the lemma. Fix $(\theta_0,r_0)\in \Sigma(a_*)$ and consider the corresponding solution $(\theta(t),r(t))$. By continuity there exists $\tau$ such that $r(t)$ is well defined and $r(t)>r_\ast$ for $t\in[0,\tau]$. Suppose that $\tau<1$ otherwise we are done. Integrating the first equation of \eqref{cuatro} and using \eqref{bound33} we have 
\[ \abs{r(t)-r_0} \leq C_1t \: \quad \textnormal{if} \: \quad t\in [0,\tau].\]
In particular, $r(\tau) \geq r_0 - C_1\tau >r_\ast$. Hence we can continue the solution until time $\tau+\tau_1$. Suppose that $\tau+\tau_1<1$ otherwise we are done. Hence, as before $r(\tau+\tau_1) \geq r_0 - C_1(\tau+\tau_1) >r_\ast$. Repeating this procedure we can reach   $\tau =1$.     

Finally, integrating the second equation of \eqref{cuatro}, we deduce that
\begin{equation*}
\abs{\theta (t) - \theta_0 - 2r_0t} \leq 2C_1 + \frac{C_1}{2(r_0-C_1)}.
\end{equation*}
Here we have employed \eqref{bound33} and the above estimates on $r(t)$.

\end{proof}

Now, let us introduce the Poincar\'e map $\mathcal{P}$ as
    \[ \begin{array}{rcl} \mathcal{P}: \quad \Sigma(a_\ast) =  \left]a_\ast,\infty\right[ \times \mathbb{T} & \longrightarrow & \mathcal{D}\subset\RR\times\TT \\ (r_0,\theta_0) & \longmapsto & (r_1,\theta_1) = \left(r(1;r_0,\theta_0), \theta (1; r_0,\theta_0)\right)\end{array}
    \]
    where $(r(t;r_0,\theta_0), \theta (t; r_0,\theta_0))$ is the solution with initial condition $(r(0),\theta (0))=(r_0,\theta_0)$. Lemma \ref{evosol} together with existence and uniqueness of the solutions of problem \eqref{cuatro} guarantee that the Poincar\'e map is well defined.\\
Due to the regularity of the vector field of \eqref{tres}, $\mathcal{P}\in\mathcal{C}^2(\Sigma(a_\ast))$, concretely is a diffeomorphism of a section of the cylinder.

The proof of the theorem will be a consequence of a suitable version of the so called Aubry-Mather theory applied to the previous Poincar\'e map. The following section is dedicated to the statement and proof of this result.

\section{A generalized Aubry-Mather theorem}
\label{sec:AM}

\noindent We denote by $\mathfrak{C} = \RR\times\TT$, $\mathbb{T}=\RR/2\pi\mathbb{Z}$ the cylinder and consider the strip $\Sigma:=(a,b)\times \mathbb{T}$ and the corresponding lift $\tilde{\Sigma}:=(a,b)\times \RR$. 

\noindent Consider a $\mathcal{C}^2$ diffeomorphism
\[\begin{array}{rcl} \Phi: \quad\Sigma & \longrightarrow & \mathfrak{C} \\ (r,\theta) & \longmapsto & (r_1,\theta_1)= \left(\mathcal{F}(r,\theta),\mathcal{G}(r,\theta)\right). \end{array}\]
We denote the lift by 
\begin{equation}
  \label{philift}
  \begin{array}{rcl} \Phi: \quad\tilde{\Sigma} & \longrightarrow & \RR^2 \\ (r,x) & \longmapsto & (r_1,x_1)= \left(\mathcal{F}(r,x),\mathcal{G}(r,x)\right) \end{array}
  \end{equation}
where
\begin{align*}
  \mathcal{F}(r,x+ 2\pi) &= \mathcal{F}(r,x), \\
  \mathcal{G}(r,x+ 2\pi) &= \mathcal{G}(r,x) + 2\pi .
\end{align*}

\noindent Consider a $\mathcal{C}^2$  function with Lipschitz inverse 

\[ \begin{array}{rcl} f: (a,b) & \longrightarrow & \RR \\ r & \longmapsto & f(r), \end{array}\]
such that $f'$ never vanishes. Without loss of generality we fix $f'>0$.

We suppose that $\Phi$ is exact symplectic with respect to the form
\[\tilde{\lambda} = \mathrm{d}f(r) \wedge \mathrm{d}\theta = f'(r) \mathrm{d}r \wedge \mathrm{d}\theta\]
that is, there exists a $\mathcal{C}^2$ function 
\[\begin{array}{rcl} \mathcal{S}:\quad \Sigma & \longrightarrow & \RR \\ (r,\theta) & \longmapsto & \mathcal{S}(r,\theta) \end{array}\] such that 
\[
\mathrm{d}\mathcal{S}(r,\theta)= f(r_1)\,\mathrm{d}\theta_1 - f(r)\,\mathrm{d}\theta, \quad \forall(r,\theta)\in \Sigma.\]
  \begin{remark}
  \em{
  Note that the function $\mathcal{S}(r,\theta)$ is defined in the cylinder, hence the lift $\mathcal{S}(r,x)$ must be a $2\pi$-periodic function in the variable $x$ such that
  \begin{equation}
    \label{exact}
    \mathcal{S}_r(r,x) = f(\mathcal{F}(r,x))\mathcal{G}_r(r,x), \qquad
    \mathcal{S}_x(r,x) = f(\mathcal{F}(r,x))\mathcal{G}_x(r,x) -f(r).
    \end{equation}
    }
 \end{remark}

\noindent We also suppose that $\Phi$ is twist, that is
\begin{equation}
  \label{deftwist}
  \partial_r \mathcal{G}(r,\theta) > 0  \qquad \forall (r,\theta)\in \Sigma.
  \end{equation}

\noindent Suppose additionally that the following uniform limits (w.r.t. $x$) exist
\begin{align*}
\alpha^+(x)&:= \frac{1}{2\pi}\left(\lim_{r \to b} \mathcal{G}(r,x)- x\right), \\ 
\alpha^-(x)&:= \frac{1}{2\pi}\left(\lim_{r \to a} \mathcal{G}(r,x)- x\right).
\end{align*}
Note that $\alpha^{\pm}(x)$ are $2\pi$-periodic $\mathcal{C}^2$ functions
 and define
\[
W^+=\min_{x}\alpha^+(x),\qquad W^-=\max_{x}\alpha^-(x).
\]

The main result of this section deals with the existence of special orbits of the diffeomorphism $\Phi$. To state the Theorem, we recall that a sequence $(x_n)_{n\in\mathbb{Z}}$ of real numbers is increasing if $x_n<x_{n+1}$ for all $n\in\mathbb{Z}$ and we say that any two translates are comparable if for any $(s,q)\in\mathbb{Z}^2$ only one of the following alternatives holds
  \[
  \overline{x}_{n+q}+2\pi s > \overline{x}_{n} \:\forall n, \quad \overline{x}_{n+q}+2\pi s = \overline{x}_{n}\:\forall n, \quad \overline{x}_{n+q}+2\pi s < \overline{x}_{n} \:\forall n.
  \]   
We are now ready to state the main result of this section:

\begin{theorem}\label{MatherTh}
With the previous setting, suppose that $W^+-W^->8\pi$ and fix 
$ \alpha$ such that $2\pi\alpha \in \left(W^- + 4\pi, W^+ - 4\pi\right)$. Then
\begin{itemize}
\item if $\alpha=s/q\in\mathbb{Q}$ there exists a $(s,q)$-periodic orbit $ ( \overline{r}_n, \overline{x}_n)_{n\in \mathbb{Z}}$ such that
    \[    \overline{r}_{n+q} =   \overline{r}_{n} ,\qquad \overline{x}_{n+q}= \overline{x}_{n}+2\pi s \quad \forall n\in\mathbb{Z};
    \]
  \item if $\alpha\in\RR\setminus\mathbb{Q}$ there exists a compact invariant subset $\mathcal{M}_\alpha\subset\Sigma$ (and a corresponding subset $\tilde{\mathcal{M}}_\alpha\subset\tilde{\Sigma}$ ) with the following properties:
    \begin{itemize}
    \item denoting $\pi:\Sigma\rightarrow \TT$ the projection, $\pi|_{\mathcal{M}_{\alpha}}$ is injective and $\mathcal{M}_\alpha = \graphh u$ for a Lipschitz function $u:\pi(\mathcal{M}_\alpha)\rightarrow\RR$,
        \item each orbit $ ( \overline{r}_n, \overline{x}_n)_{n\in \mathbb{Z}}\in \tilde{\mathcal{M}}_\alpha$ is such that the sequence $(\overline{x}_n)$ is increasing and any two translates are comparable, 
        \item each orbit $ ( \overline{r}_n, \overline{x}_n)_{n\in \mathbb{Z}}\in \tilde{\mathcal{M}}_\alpha$ has rotation number $\alpha$, i.e.
       \[
    \frac{1}{2\pi}\lim_{n \to \infty} \frac{\overline{x}_n}{n}=\alpha;
    \]    
              \item the set $\mathcal{M}_\alpha$ is either an invariant curve or a Cantor set.
      \end{itemize}

\end{itemize}

\end{theorem}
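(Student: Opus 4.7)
The plan is to reduce Theorem \ref{MatherTh} to the classical Aubry-Mather theorem by extending $\Phi$ from the strip $\Sigma$ to an exact-symplectic twist diffeomorphism $\tilde{\Phi}$ of the full cylinder $\mathfrak{C}$, apply Mather's theorem to $\tilde{\Phi}$, and then show that for each rotation number $\alpha$ in the prescribed window the Aubry-Mather orbits are trapped inside the original strip and hence are genuine orbits of $\Phi$. The $4\pi$ buffers on each side, equivalently the hypothesis $W^+-W^->8\pi$, are precisely what is needed for this confinement step.

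First I would pass to the generating-function formulation. The exactness relations \eqref{exact} together with the twist condition \eqref{deftwist} and $f'>0$ imply that the lift \eqref{philift} of $\Phi$ on $\tilde{\Sigma}$ is globally generated by a $\mathcal{C}^2$ function $h(x,x_1)$ with $h(x+2\pi,x_1+2\pi)=h(x,x_1)$ and strict twist $\partial^2_{x x_1}h$ of constant sign, via $f(r)=-\partial_x h$, $f(r_1)=\partial_{x_1}h$. I would then construct an extension $\tilde{h}$ on all of $\RR^2$ by smoothly gluing $h$ to the generating functions of two linear twists on $r<a$ and $r>b$, whose asymptotic rotations are chosen to lie strictly below $W^-$ and strictly above $W^+$ respectively. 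Using an $r$-cutoff the extension can be arranged so that $\tilde{h}$ remains periodic and strictly twist, producing an exact-symplectic twist diffeomorphism $\tilde{\Phi}$ of the entire cylinder $\mathfrak{C}$ whose rotation interval on the new boundaries strictly contains $(W^-,W^+)$.

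Once $\tilde{\Phi}$ is in hand, the classical Aubry-Mather theorem (see \cite{Mathertop,Aubry}) applied to $\tilde{\Phi}$ yields, for every real number $\omega$ in its rotation interval, either a periodic orbit of type $(s,q)$ when $\omega=s/q\in\mathbb{Q}$, or a compact invariant set enjoying all the structural properties listed in the statement: ordering with comparable translates, graph of a Lipschitz function of the angle, rotation number $\omega$, and the dichotomy between invariant curve and Cantor set in the irrational case.

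The technical heart, and the step I expect to be the main obstacle, is the confinement of these orbits to $\Sigma$ when $2\pi\alpha\in(W^-+4\pi,W^+-4\pi)$. In each extended region the map is a linear twist, so it carries genuine rotational invariant circles with every prescribed rotation number; I would choose two such auxiliary circles $\Gamma^\pm\subset\mathfrak{C}\setminus\Sigma$ with rotations $2\pi\alpha_\pm$ satisfying $W^-+2\pi<2\pi\alpha_-<2\pi\alpha<2\pi\alpha_+<W^+-2\pi$. Because any Aubry-Mather orbit of $\tilde{\Phi}$ is totally ordered, its projection to the cylinder cannot cross any rotational invariant circle whose rotation number differs from $\alpha$, so the orbit is squeezed strictly between $\Gamma^-$ and $\Gamma^+$, hence contained in $\Sigma$. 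The two outer $2\pi$ slacks are what allow the auxiliary circles $\Gamma^\pm$ to be placed between the orbit and the boundary, while the two inner $2\pi$ slacks absorb the oscillation of $x\mapsto\mathcal{G}(r,x)-x$ at the boundary captured by taking $W^+=\min\alpha^+$, $W^-=\max\alpha^-$. Once confinement is established, the orbit is an orbit of $\Phi$ itself, and all the conclusions of Theorem \ref{MatherTh} transfer from $\tilde{\Phi}$ to $\Phi$ unchanged.
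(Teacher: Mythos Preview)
Your overall strategy---extend, apply classical Aubry--Mather, then confine---is exactly what the paper does, but the two key technical steps are carried out differently, and your confinement argument as written contains a gap.

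For the extension, the paper works entirely at the level of the generating function $h(x,x_1)$, defined on the domain $\mathcal{B}=\{(x,x_1):\alpha^-(x)<x_1-x<\alpha^+(x)\}$ (Lemma~\ref{generating}), and extends it to all of $\RR^2$ by an explicit wave-equation construction (Lemma~\ref{extension}) that preserves the periodicity and the strict twist $\partial^2_{xx_1}\tilde h<-\delta$. There is no extension of the map $\Phi$ in the $(r,\theta)$ picture and no gluing with linear twists; your ``$r$-cutoff'' does not directly apply, since $h$ lives in $(x,x_1)$-space. This matters because it is not obvious how to glue an exact-symplectic map to linear twists while keeping exactness and a globally defined generating function.

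For confinement, the paper's argument is both simpler and more direct than your barrier idea. The version of Mather's theorem invoked (Theorem~\ref{Mather}) carries the uniform bound $|\tilde x_n-\tilde x_0-2\pi n\alpha|<2\pi$ for all $n$, and the triangle inequality immediately gives
\[
2\pi\alpha-4\pi<\tilde x_{n+1}-\tilde x_n<2\pi\alpha+4\pi.
\]
This is precisely the origin of the $4\pi$ buffers: if $2\pi\alpha\in(W^-+4\pi,W^+-4\pi)$ one can choose $\varepsilon>0$ so that every pair $(\tilde x_n,\tilde x_{n+1})$ lies in the substrip $\mathcal{W}=\{W^-+\varepsilon\le x_1-x\le W^+-\varepsilon\}\subset\mathcal{B}$ on which $\tilde h=h$. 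Hence the sequence solves the discrete Euler--Lagrange equation for the original $h$, and setting $\overline r_n=f^{-1}(-\partial_1 h(\tilde x_n,\tilde x_{n+1}))$ gives a genuine orbit of $\Phi$ in $\Sigma$.

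Your barrier argument, by contrast, has an internal inconsistency. You place $\Gamma^\pm$ in $\mathfrak{C}\setminus\Sigma$, where by your own construction the extended map is a linear twist with rotations strictly below $W^-$ (lower region) and strictly above $W^+$ (upper region). Yet you also require their rotation numbers $\alpha_\pm$ to satisfy $W^-+2\pi<2\pi\alpha_-<2\pi\alpha_+<W^+-2\pi$, i.e., strictly \emph{inside} $(W^-,W^+)$. No invariant circle of the linear-twist extension can have such a rotation number, so the circles cannot sit where you claim. Even if one repaired this by choosing $\Gamma^\pm$ with rotations just outside $(W^-,W^+)$, being squeezed between two circles lying outside $\Sigma$ only confines the Mather set to the annulus between $\Gamma^-$ and $\Gamma^+$, which still overlaps the extension region; you would then need a further argument (e.g., shrinking the strip before extending and controlling the transition zone) to land inside $\Sigma$. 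The paper's one-line use of the bound $|\tilde x_n-\tilde x_0-2\pi n\alpha|<2\pi$ sidesteps all of this.
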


The following corollary gives an equivalent interpretation of the result and has been proven in \cite{Maro1}.
\begin{corollary}\label{MatherCor}
 For each $\alpha$ there exists two functions $\phi,\eta:\RR\rightarrow\RR$ such that, for every $\xi\in\mathbb{R}$
  \begin{align*}
    &\phi(\xi+2\pi) = \phi(\xi)+2\pi, \quad \eta(\xi+2\pi)=\eta(\xi),\\
    &\Phi(\phi(\xi),\eta(\xi))=(\phi(\xi+2\pi\alpha),\eta(\xi+2\pi\alpha))
  \end{align*}
  where $\phi$ is monotone (strictly if $\alpha\in\RR\setminus\mathbb{Q}$ ) and $\eta$ is of bounded variation.
\end{corollary}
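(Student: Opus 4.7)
Fix an orbit $(\bar r_n,\bar x_n)_{n\in\ZZ}\subset\tilde{\mathcal{M}}_\alpha$ provided by Theorem~\ref{MatherTh}, and set $A:=2\pi\alpha\,\ZZ+2\pi\,\ZZ$. The ``comparable translates'' clause says the sign of $\bar x_{n+q}+2\pi s-\bar x_n$ is independent of $n$ and matches that of $2\pi\alpha q+2\pi s$, so the assignment
\[
\phi_0(2\pi\alpha n+2\pi k):=\bar x_n+2\pi k,\qquad \eta_0(2\pi\alpha n+2\pi k):=\bar r_n
\]
is well-defined on $A$, $\phi_0$ is strictly monotone, and $\phi_0(a+2\pi)=\phi_0(a)+2\pi$, $\eta_0(a+2\pi)=\eta_0(a)$. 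Since $\Phi$ shifts the orbit by one index, the desired intertwining identity is automatic on $A$; the whole task reduces to extending $\phi_0,\eta_0$ from $A$ to $\RR$ while preserving these properties.

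For rational $\alpha=s/q$, $A=(2\pi/q)\ZZ$ is discrete, so I would extend $\phi_0,\eta_0$ by right-continuous step interpolation, obtaining a monotone $\phi$ and a $2\pi$-periodic step $\eta$ which is trivially of bounded variation. For irrational $\alpha$, $A$ is dense in $\RR$ and I would take the left-continuous envelope
\[
\phi(\xi):=\sup\{\phi_0(a):\,a\in A,\ a\le\xi\}.
\]
This $\phi$ is monotone and satisfies $\phi(\xi+2\pi)=\phi(\xi)+2\pi$. Strict monotonicity follows by picking two intermediate points $a_0<a_1$ in $A\cap(\xi_1,\xi_2)$ via density and applying strict monotonicity of $\phi_0$, which yields $\phi(\xi_1)\le\phi_0(a_0)<\phi_0(a_1)\le\phi(\xi_2)$. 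For $\eta$, I would first extend the Lipschitz graph function $u$ of Theorem~\ref{MatherTh} to a Lipschitz $\tilde u:\TT\to\RR$ (McShane--Kirszbraun) and then set $\eta(\xi):=\tilde u(\phi(\xi)\bmod 2\pi)$. Periodicity follows from the semi-periodicity of $\phi$, and bounded variation follows because $\phi\bmod 2\pi$ has total variation at most $4\pi$ per $2\pi$-period (the monotone part contributes $2\pi$, the single wrap-around another $2\pi$) composed with a Lipschitz function; on $A$ one checks that $\eta$ agrees with $\eta_0$.

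The equation $\Phi(\phi(\xi),\eta(\xi))=(\phi(\xi+2\pi\alpha),\eta(\xi+2\pi\alpha))$, understood as $\Phi$ applied to the corresponding orbit point of $\tilde\Sigma$ in the convention of \eqref{philift}, holds on the dense set $A$ by direct computation from the orbit invariance, and I would extend it to $\xi\notin A$ by approximating from the left by $a_j\in A$ and invoking continuity of the $\mathcal{C}^2$ diffeomorphism $\Phi$ together with the left-continuous choice of $\phi$ and $\eta$. The main obstacle is the Cantor case, where $\phi$ carries countably many jump discontinuities, one per gap of $\overline{\phi_0(A)}$. At such a jump $\xi_*$ one needs $\xi_*+2\pi\alpha$ to be a jump of $\phi$ with matching left-limits; this is essentially a Poincar\'e--Denjoy semi-conjugacy of $\Phi|_{\mathcal M_\alpha}$ to the rotation $R_{2\pi\alpha}$, and it follows once one observes that $\Phi$ permutes the gaps of the Cantor invariant set $\overline{\{\bar x_n+2\pi k\}}$ in exactly the way $R_{2\pi\alpha}$ permutes the corresponding jump points in $\xi$-space, which is the order-preserving content of the ``comparable translates'' property applied again.
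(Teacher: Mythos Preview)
The paper does not prove this corollary itself; it simply defers to \cite{Maro1}. Your approach---the hull-function construction that parametrises a single Aubry--Mather orbit over the dense subgroup $A=2\pi\alpha\,\ZZ+2\pi\,\ZZ$ and then extends monotonically---is the standard one and is essentially correct.

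Two small points. First, Theorem~\ref{MatherTh} states the comparable-translates property only in the irrational case, so in the rational case $\alpha=s/q$ you should obtain the monotonicity of $\phi_0$ on $A=(2\pi/q)\ZZ$ directly from the $(s,q)$-periodicity $\bar x_{n+q}=\bar x_n+2\pi s$ together with the fact that $(\bar x_n)$ is increasing (Theorem~\ref{Mather}(i)); the claim that the sign of $\bar x_{n+q}+2\pi s-\bar x_n$ matches that of $2\pi\alpha q+2\pi s$ also deserves a one-line justification via iteration and the rotation-number bound $|\bar x_n-\bar x_0-2\pi n\alpha|<2\pi$. Second, your closing paragraph on Cantor gaps is unnecessary. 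Since $\xi\notin A$ implies $\{a\in A:a\le\xi\}=\{a\in A:a<\xi\}$, the sup definition makes $\phi$ left-continuous at every $\xi\notin A$; hence your left-limit argument through $a_j\in A$, $a_j\uparrow\xi$, combined with continuity of $\Phi$, already gives the intertwining at all $\xi\notin A$ (and at $\xi+2\pi\alpha\notin A$). On $A$ it holds by construction, so no separate jump or gap analysis is needed.

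A minor technical remark: after McShane--Kirszbraun, truncate $\tilde u$ into a compact subinterval of $(a,b)$ containing the range of $u$, so that $(\eta(\xi),\phi(\xi))$ stays in the domain of $\Phi$. Also, since $\tilde u$ can be taken $2\pi$-periodic on $\RR$, writing $\eta=\tilde u\circ\phi$ directly gives $\mathrm{TV}\bigl(\eta|_{[0,2\pi]}\bigr)\le L\,(\phi(2\pi)-\phi(0))=2\pi L$, so the wrap-around bookkeeping is not needed.
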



%
The proof of Theorem \ref{MatherTh} will make use of the {\it generating function}. We introduce it in the following

\begin{lemma}\label{generating}
  There exists an open connected set $\mathcal{B}\subset\mathbb{R}^2$ and a function $h:\mathcal{B}\rightarrow \mathbb{R}$, called {\it generating function} such that
  \begin{itemize}
    \item[o)] $\mathcal{B}$ is invariant under the translation $(x,x_1)\mapsto (x+2\pi,x_1+2\pi)$;
  \item[i)] $h\in \mathcal{C}^3(\mathcal{B})$;
  \item[ii)] $h(x+2\pi,x_1+2\pi)=h(x,x_1)$ for all $(x,x_1)\in\mathcal{B}$;
  \item[iii)]   $\partial^2_{xx_1} h(x,x_1)<0$ for all $(x,x_1)\in\mathcal{B}$;
  \item[iv)]     a sequence $(\overline{r}_n,\overline{x}_n)_{n\in\mathbb{Z}}$ is an orbit of $\tilde\Phi$ iff for all $n\in \mathbb{Z}$ 
    \[
    \partial_{1}h(\overline{x}_n,\overline{x}_{n+1})+ \partial_{2}h(\overline{x}_{n-1},\overline{x}_{n})=0
    \quad\mbox{and}\quad
    f(\overline{r}_{n}) = - \partial_{1}h(\overline{x}_n,\overline{x}_{n+1}).
    \]
 
  \end{itemize}
  
\end{lemma}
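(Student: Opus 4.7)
The plan is to construct $h$ as a Legendre-type transform of the lifted action $\mathcal{S}$: the twist hypothesis \eqref{deftwist} lets us invert the angle map $r\mapsto\mathcal{G}(r,x)$ and express $r$ as a function of two consecutive angles $x,x_1$. Concretely, since $\mathcal{G}(\cdot,x):(a,b)\to\RR$ is a $\mathcal{C}^2$ diffeomorphism onto the open interval $(x+2\pi\alpha^-(x),x+2\pi\alpha^+(x))$, I would set
\[
\mathcal{B}:=\{(x,x_1)\in\RR^2:\,x+2\pi\alpha^-(x)<x_1<x+2\pi\alpha^+(x)\},
\]
which is open (the $\alpha^\pm$ are continuous), path-connected (the endpoints vary continuously with $x$, so any two points can be joined by a path staying between the two graphs), and invariant under $(x,x_1)\mapsto(x+2\pi,x_1+2\pi)$ since $\alpha^\pm$ are $2\pi$-periodic; this proves (o). The implicit function theorem then yields a $\mathcal{C}^2$ map $R:\mathcal{B}\to(a,b)$ with $\mathcal{G}(R(x,x_1),x)=x_1$, whence $R_{x_1}=1/\mathcal{G}_r$ and $R_x=-\mathcal{G}_x/\mathcal{G}_r$.

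Next I would define $h(x,x_1):=\mathcal{S}(R(x,x_1),x)$ in the lift. Combining the chain rule with the exactness identities \eqref{exact} and the cancellation $\mathcal{G}_r R_x+\mathcal{G}_x=0$, a short computation gives the clean formulas
\[
\partial_x h=-f(R(x,x_1)),\qquad \partial_{x_1}h=f(\mathcal{F}(R(x,x_1),x)),
\]
from which (iii) follows at once: $\partial^2_{xx_1}h=-f'(R)/\mathcal{G}_r(R,x)<0$, since $f'>0$ and $\mathcal{G}_r>0$. Property (ii) comes from the $2\pi$-periodicity of $\mathcal{S}(r,\cdot)$ together with $R(x+2\pi,x_1+2\pi)=R(x,x_1)$, inherited from $\mathcal{G}(r,x+2\pi)=\mathcal{G}(r,x)+2\pi$. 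For (iv), the condition $f(\overline r_n)=-\partial_1 h(\overline x_n,\overline x_{n+1})$ is equivalent, by invertibility of $f$, to $\mathcal{G}(\overline r_n,\overline x_n)=\overline x_{n+1}$; the discrete Euler--Lagrange equation $\partial_1 h(\overline x_n,\overline x_{n+1})+\partial_2 h(\overline x_{n-1},\overline x_n)=0$ then reads $f(\mathcal{F}(\overline r_{n-1},\overline x_{n-1}))=f(\overline r_n)$, so taken together they encode exactly $\tilde\Phi(\overline r_{n-1},\overline x_{n-1})=(\overline r_n,\overline x_n)$.

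The only slightly delicate point is the regularity claim (i). A naive count of derivatives yields only $h\in\mathcal{C}^2$, since $\mathcal{S}\in\mathcal{C}^2$ and $R\in\mathcal{C}^2$. The extra derivative is bought by the exactness-driven cancellation above: the first partials of $h$ collapse to $-f(R)$ and $f(\mathcal{F}(R,\cdot))$, each a composition of $\mathcal{C}^2$ functions and therefore itself $\mathcal{C}^2$, which forces $h\in\mathcal{C}^3$. Recognizing this cancellation, which is the discrete analogue of the standard regularity gain for Hamilton--Jacobi actions, is the main (if mild) subtlety of the proof.
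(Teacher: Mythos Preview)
Your proposal is correct and follows essentially the same route as the paper: define $\mathcal{B}$ as the region between the graphs of $x+2\pi\alpha^\pm(x)$, invert $x_1=\mathcal{G}(r,x)$ via the twist condition to get $R=\mathcal{R}(x,x_1)$, set $h=\mathcal{S}(\mathcal{R},x)$, and use the exactness identities \eqref{exact} to obtain $\partial_x h=-f(\mathcal{R})$, $\partial_{x_1}h=f(\mathcal{F}(\mathcal{R},x))$, from which (i)--(iv) follow. You also correctly identify the one subtle point, namely that the $\mathcal{C}^3$ regularity comes a posteriori from these simplified expressions for $\partial_x h$ and $\partial_{x_1}h$ rather than from a direct derivative count.
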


\begin{figure}
\centering 
\begin{tikzpicture}
 	\begin{axis}[axis on top=true,xtick=\empty, ytick=				\empty, axis x line=center,
	    axis y line=center,
		minor tick num=1,
	    xlabel={$x$},
    	xmin=-1, xmax=10,
	    ylabel={$x_1$},
    	ymin=-1.8, ymax=9.4,
    	]
    \addplot[name path=A,red, samples=100, 						domain=-1:8]{0.3*sin(deg(6*x))+ 			4.3+x}
    	node[above left, scale=0.9, pos 			= .4,] {$\alpha^+(x) +  x$};
    \addplot[blue, dashed,samples=100, 			domain=-1:8]{4 +x} 
	    node[below right, sloped, 					scale=0.9, pos = .4] {$W^++x$};       	\addplot[blue, dashed,samples=100, 			domain=-1:8]{-1 +x} 
	    node[above left,sloped, 					scale=0.9, pos = .5] {$W^- + x				$};
    \addplot[name path=B,red, samples=100, 					domain=-1:8]{0.4*sin(deg(5*x))+ 			-1.4 + x}
        node[below right, scale=0.9, pos 		= .6] {$\alpha^-(x) + x$}
        ;
        \addplot[orange!18] fill between[of=A and B];
    \end{axis}
\draw node [scale=1.5] at (2,3){$\mathcal{B}$};
    
\end{tikzpicture}
\caption{Domain $\mathcal{B}$.}
\label{domainB}

\end{figure}
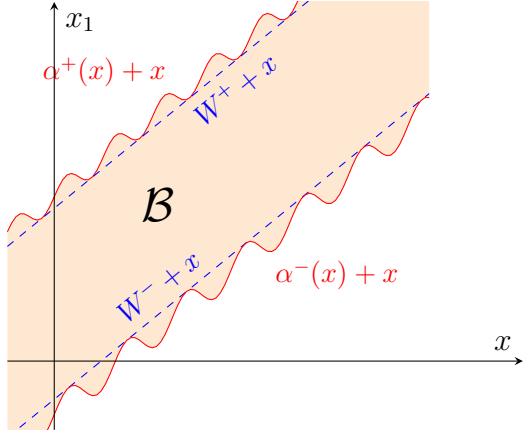   

\begin{proof}
By the twist property, $\alpha^+(x)>\alpha^-(x) \quad \forall x \in \RR$,
so that we can consider the open connected set (see figure \ref{domainB})
\[
\mathcal{B} = \left\lbrace(x,x_1)\in \RR^2:\alpha^-(x) < x_1-x <\alpha^+(x) \right\rbrace.
\]
From the periodic property of the functions $\alpha^{\pm}(x)$ this set is invariant under the  translation
\[
T_{2\pi,2\pi}(x,x_1) = (x+2\pi,x_1+2\pi).
\]
By the twist condition we can solve the implicit function problem
\[x_1=\mathcal{G}(r,x)\]
and obtain a unique $\mathcal{C}^2$ function $\mathcal{R}(x,x_1):\, \mathcal{B} \longrightarrow (a,b)$ such that
\[
x_1=\mathcal{G}(r,x)\Longleftrightarrow r=\mathcal{R}(x,x_1)
\]
and, by implicit differentiation,
\begin{equation}
  \label{implicit}
\mathcal{G}_r(\mathcal{R},x)\mathcal{R}_x+\mathcal{G}_x(\mathcal{R},x) =0, \qquad \mathcal{G}_r(\mathcal{R},x)\mathcal{R}_{x_1}=1.
\end{equation}
Moreover, uniqueness implies that $\mathcal{R}(x+2\pi,x_1+2\pi)=\mathcal{R}(x,x_1)$. Analogously we get
\[
r_1=\mathcal{F}(r,x)\Longleftrightarrow r_1 = \mathcal{F}(\mathcal{R}(x,x_1),x):=\mathcal{R}_1(x,x_1)
\]
with $\mathcal{R}_1(x+2\pi,x_1+2\pi)=\mathcal{R}_1(x,x_1) $.
Hence, the map \eqref{philift} is equivalent to
\begin{equation*}
  \left
  \lbrace
  \begin{array}{l}
    r_1=\mathcal{R}_1(x,x_1),
    \\
    r=\mathcal{R}(x,x_1)
  \end{array}
  \right.
  \quad \mbox{with } (x,x_1)\in \mathcal{B}. 
\end{equation*}

Now, we use the exact symplectic condition and define the generating function
\[
h(x,x_1):=\mathcal{S}(\mathcal{R}(x,x_1),x).
\]
This maps is clearly $\mathcal{C}^2(\mathcal{B})$, and, a posteriori, we will get $\mathcal{C}^3$ regularity. From the periodicity conditions of $\mathcal{S}$ and $\mathcal{R}$, one can prove the periodicity condition {\it ii)}.\\
To prove point {\it iii)}, we use (\ref{exact}),(\ref{implicit}) to get that  for all $(x,x_1)\in \mathcal{B}$,
\begin{align}
  \label{dxh}
\nonumber\partial_{x}h (x,x_1) &= \partial_{x} \mathcal{S}(\mathcal{R}(x,x_1),x) = \mathcal{S}_r(\mathcal{R}(x,x_1),x)\mathcal{R}_{x}+\mathcal{S}_x(\mathcal{R}(x,x_1),x) \\
  &= f(\mathcal{F}(\mathcal{R},x))\mathcal{G}_r(\mathcal{R},x)\mathcal{R}_{x} + f(\mathcal{F}(\mathcal{R},x))\mathcal{G}_x(\mathcal{R},x) -f(\mathcal{R})  \\\nonumber
&= -f(\mathcal{R}),
\end{align}
so that the twist condition and the monotonicity of $f$ imply
\begin{align*}
  \partial_{x,x_1}h(x,x_1)&= -\partial_{x_1}f(\mathcal{R}(x,x_1))=-f'(\mathcal{R})\partial_{x_1}\mathcal{R}(x,x_1)=-\frac{f'(\mathcal{R})}{\partial_r\mathcal{G}(x,x_1)}<0.
\end{align*}

To prove the last point, a similar computation as \eqref{dxh} gives  for all \\$(x,x_1)\in \mathcal{B}$,
\begin{align}
  \label{dx1h}
  \nonumber\partial_{x_1}h (x,x_1) &= \partial_{x_1} \mathcal{S}(\mathcal{R}(x,x_1),x) = \mathcal{S}_r(\mathcal{R}(x,x_1),x)\mathcal{R}_{x_1}     \\
 &=f(\mathcal{F}(\mathcal{R},x))\mathcal{G}_r(\mathcal{R},x)\mathcal{R}_{x_1} = f(\mathcal{F}(\mathcal{R},x))    \\ \nonumber&=f(\mathcal{R}_1).
  %
\end{align}
Equations \eqref{dxh}-\eqref{dx1h}, together with the regularity of $f,\mathcal{R},\mathcal{R}_1$ have the consequence that $h\in \mathcal{C}^3(\mathcal{B})$, proving point {\it i)}.

Less formally \eqref{dxh}-\eqref{dx1h} also imply that the map $\Phi$ can be expressed implicitly: 
\begin{equation*}
  \left
  \lbrace
  \begin{array}{l}
   \partial_{x_1}h (x,x_1)=f(r_1 )
    \\
    \partial_{x}h (x,x_1)=-f(r)
  \end{array}
  \right.
  \quad \mbox{with }(x,x_1)\in \mathcal{B}.
\end{equation*}
It means that an orbit  $(\overline{r}_n,\overline{x}_n)_{n\in\mathbb{Z}}$ is such that for every $n\in\mathbb{Z}$

\begin{equation*}
  \left
  \lbrace
  \begin{array}{l}
   f(\overline{r}_{n+1}) = \partial_{2}h(\overline{x}_n,\overline{x}_{n+1})
    \\
    f(\overline{r}_{n}) = - \partial_{1}h(\overline{x}_n,\overline{x}_{n+1}).
  \end{array}
  \right.
\end{equation*}
%
This implies $f(\overline{r}_{n}) = -\partial_{1}h(\overline{x}_n,\overline{x}_{n+1})= \partial_{2}h(\overline{x}_{n-1},\overline{x}_{n})$ so that 
\[\partial_{1}h(\overline{x}_n,\overline{x}_{n+1})+ \partial_{2}h(\overline{x}_{n-1},\overline{x}_{n})=0, \qquad \forall n\in \mathbb{Z}. \]

\end{proof}
\begin{remark}
\em{
  The equation
  \[
  \partial_{1}h({x}_n,{x}_{n+1})+ \partial_{2}h({x}_{n-1},{x}_{n})=0, \qquad \forall n\in \mathbb{Z}
  \]
  is known as \textit{discrete Euler-Lagrange equation}.
  }
  \end{remark}

The usual Mather's theorem (see Theorem \ref{Mather}), gives sufficient conditions on the generating function in order to get orbits with rotation number. In particular it is required $h\in \mathcal{C}^2(\RR^2)$ and properties  $ii)$ y $iii)$ of Lemma \eqref{generating} should hold in the whole plane. For this reason we need the following extension lemma. A version of this lemma is stated in \cite[chapter 8]{MatherForni} and for the sake of completeness, we report here a detailed proof (see also \cite{Maro2,Maro3}).

\begin{lemma}\label{extension}
Let $\mathcal{B}^+, \mathcal{B}^- : \RR\longrightarrow \RR$ be $\mathcal{C}^{r}$ diffeomorphisms satisfying \[\mathcal{B}^{\pm}(x+2\pi)= \mathcal{B}^{\pm}(x)+2\pi \] for some $r\geq 2$. Suppose that \[ \mathcal{B}^+(x) > \mathcal{B}^-(x) \qquad \forall x \in \RR.\] 
 Define the following set \[\mathcal{W}=\left\lbrace(x,x_1)\in\RR^2 :  \mathcal{B}^-(x)\leq x_1\leq \mathcal{B}^+(x)\right\rbrace\]
and let $h : \mathcal{W}\longrightarrow \RR$ be a $\mathcal{C}^{r+1}$ function such that:
\begin{itemize}
  
\item 
$h(x+2\pi,x_1+2\pi)=h(x,x_1), \qquad (x,x_1)\in \mathcal{W}$;

\item $ \partial_{x,x_1}h(x,x_1)<0, \qquad (x,x_1)\in\mathcal{W}  $.
\end{itemize}
Then there exists $\tilde{h}\in\mathcal{C}^{r}(\RR^2)$ such that:

\begin{itemize}

\item
$\tilde{h}(x+2\pi,x_1+2\pi)=\tilde{h}(x,x_1), \qquad (x,x_1)\in \RR^2 \ $;

\item $ \partial_{x,x_1}\tilde{h}(x,x_1)<-\delta<0, \quad \mbox{with } \delta>0 \qquad (x,x_1)\in \RR^2 \ $;
\item $\tilde{h}=h$ on $\mathcal{W}$.
\end{itemize}
\end{lemma}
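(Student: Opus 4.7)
My plan is to build $\tilde h$ in three stages: upgrade the pointwise hypothesis $\partial_{xx_1}h<0$ to a uniform negative bound, extend $h$ to a $\mathcal{C}^r$ periodic function on $\RR^2$ by a boundary-straightening Seeley extension, and finally add an explicit periodic ``barrier'' that restores uniform negativity of the mixed partial outside the strip.

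For the first stage, I note that $\mathcal{W}$ is invariant under the diagonal translation $T_{2\pi,2\pi}\colon (x,x_1)\mapsto(x+2\pi,x_1+2\pi)$ and a fundamental domain is compact; continuity of $\partial_{xx_1}h$ and the hypothesis $\partial_{xx_1}h<0$ therefore improve to a uniform bound $\partial_{xx_1}h\le -2\delta$ for some $\delta>0$. For the second stage, I would straighten each boundary curve $x_1=\mathcal{B}^\pm(x)$ by the $\mathcal{C}^r$ change of coordinates $(x,x_1)\mapsto(x,x_1-\mathcal{B}^\pm(x))$, apply Seeley's half-space extension theorem across each boundary in the straightened coordinates, and invert the change of variables; since the straightening commutes with $T_{2\pi,2\pi}$, the resulting $\bar h$ is periodic, $\mathcal{C}^r$ on $\RR^2$, and agrees with $h$ on $\mathcal{W}$. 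A uniform-continuity argument then lets me pick $\varepsilon_1>0$ small enough that $\partial_{xx_1}\bar h\le -\delta$ on the enlarged strip $\mathcal{W}_{\varepsilon_1}:=\{(x,x_1):\mathcal{B}^-(x)-\varepsilon_1\le x_1\le \mathcal{B}^+(x)+\varepsilon_1\}$.

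The third stage is the crux. The naive attempt $\tilde h=\bar h+L\cdot\tfrac12(x_1-x)^2$ produces a periodic mixed-partial contribution of $-L$, but the second summand does not vanish on $\mathcal{W}$ and would destroy the agreement with $h$. The fix is to use barriers that vanish together with all their derivatives on $\mathcal{W}$. I pick $\varphi\in\mathcal{C}^\infty(\RR)$ with $\varphi\equiv 0$ on $(-\infty,0]$, $\varphi''\ge 0$ everywhere, and $\varphi''\equiv 1$ on $[\varepsilon_1,\infty)$ (for instance $\varphi(s)=\int_0^s\!\int_0^t\rho(u)\,du\,dt$ for a suitable non-negative $\mathcal{C}^\infty$ bump $\rho$) and set
\[
q_+(x,x_1):=\varphi(x_1-\mathcal{B}^+(x)),\qquad q_-(x,x_1):=\varphi(\mathcal{B}^-(x)-x_1).
\]
Each $q_\pm$ is $\mathcal{C}^r$, $T_{2\pi,2\pi}$-invariant, and identically zero on $\mathcal{W}$. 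A direct computation using $\varphi''\ge 0$ and the positivity of $\mathcal{B}^{\pm\prime}$ (which follows from $\mathcal{B}^\pm(x+2\pi)=\mathcal{B}^\pm(x)+2\pi$ together with the diffeomorphism assumption) gives $\partial_{xx_1}q_\pm\le 0$ everywhere, with the improved bound $\partial_{xx_1}q_\pm\le -c_0$ outside $\mathcal{W}_{\varepsilon_1}$, where $c_0>0$ is a uniform lower bound on the periodic positive functions $\mathcal{B}^{\pm\prime}$. Setting
\[
\tilde h:=\bar h+L\,(q_++q_-),\qquad L>\tfrac{1}{c_0}\bigl(\delta+\|\partial_{xx_1}\bar h\|_\infty\bigr),
\]
then gives $\tilde h=h$ on $\mathcal{W}$ (since $q_\pm\equiv 0$ there), $\partial_{xx_1}\tilde h\le -\delta$ on the collar $\mathcal{W}_{\varepsilon_1}\setminus\mathcal{W}$ (combining $\partial_{xx_1}\bar h\le -\delta$ with $\partial_{xx_1}q_\pm\le 0$), and $\partial_{xx_1}\tilde h\le -\delta$ outside $\mathcal{W}_{\varepsilon_1}$ (where the $Lc_0$ term dominates $\|\partial_{xx_1}\bar h\|_\infty$). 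Regularity and periodicity are preserved throughout.

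The main obstacle I anticipate is precisely this third stage: one must keep the construction entirely ``silent'' on $\mathcal{W}$ (preserving $\tilde h=h$) while developing uniform negative mixed curvature outside the collar. The key observation that makes this work cleanly is that the barriers $q_\pm$ are pulled back from a convex function of a single variable with a flat germ at $0$, so that the mixed-partial sign arises automatically from $\mathcal{B}^{\pm\prime}>0$; no partition-of-unity gluing is needed, which avoids the cross terms involving derivatives of cutoffs times derivatives of $\bar h$ that would otherwise be difficult to control.
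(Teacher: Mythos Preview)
Your approach is correct and takes a genuinely different route from the paper's. The paper constructs $\tilde h$ by solving a hyperbolic Cauchy problem: it first extends $\partial_{xx_1}h$ (rather than $h$) to a $\mathcal{C}^{r-1}$ function on $\RR^2$, blends it with the constant $-\delta$ via a smooth cutoff to obtain a function $\mathcal{D}\le -\delta$ everywhere that still equals $\partial_{xx_1}h$ on $\mathcal{W}$, and then sets $\tilde h=u$ where $u$ solves $\partial_{xx_1}u=\mathcal{D}$ with Cauchy data matching $h$ along each boundary curve $x_1=\mathcal{B}^\pm(x)$. The change of variables $t=\tfrac12(x_1-\mathcal{B}^\pm(x))$, $y=\tfrac12(x_1+\mathcal{B}^\pm(x))$ turns this into the classical one-dimensional wave equation, whose well-posedness and regularity theory give $u\in\mathcal{C}^r$; uniqueness then forces $u=h$ on $\mathcal{W}$ since $\mathcal{D}=\partial_{xx_1}h$ there. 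Your route avoids any PDE: you extend $h$ itself by Seeley reflection and then add explicit one-variable convex barriers $q_\pm=\varphi(\pm(x_1-\mathcal{B}^\pm(x)))$, whose mixed partial $-\mathcal{B}^{\pm\prime}(x)\,\varphi''$ is automatically non-positive and uniformly negative beyond the collar. The paper's approach has the conceptual advantage that the mixed partial of the extension is \emph{prescribed} to be $\mathcal{D}$ rather than merely estimated; your barrier construction is more elementary and self-contained, requiring no PDE regularity theory.

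One technical point deserves tightening. After straightening the upper boundary, $h$ lives on the bounded strip $-d(x)\le s\le 0$ with $d(x)=\mathcal{B}^+(x)-\mathcal{B}^-(x)$, not on a genuine half-space, so the Seeley reflection formula $\sum_j a_j\, h(\cdot,-b_js)$ is only defined for $s$ in a collar $0<s\le d(x)/\max_j b_j$. Thus $\bar h$ is a priori defined only on some $\mathcal{W}_{\varepsilon_2}$, not on all of $\RR^2$, and your later use of $\|\partial_{xx_1}\bar h\|_\infty$ presupposes a global extension. The fix is routine: multiply the Seeley extension by a $T_{2\pi,2\pi}$-invariant $\mathcal{C}^\infty$ cutoff that is identically $1$ on $\mathcal{W}_{\varepsilon_1}$ and supported in $\mathcal{W}_{\varepsilon_2}$, choosing $\varepsilon_1<\varepsilon_2$ small enough that $\partial_{xx_1}\bar h\le -\delta$ on $\mathcal{W}_{\varepsilon_1}$ before the cutoff acts. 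The resulting global $\bar h$ vanishes outside a set with compact fundamental domain, so $\|\partial_{xx_1}\bar h\|_\infty<\infty$ and your choice of $L$ goes through unchanged.
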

\vspace{0.05cm}
\begin{proof}
  The domain $\mathcal{W}$ is invariant under the translation $T_{2\pi,2\pi}$, in consequence the cocient set $\mathcal{W}/(2\pi\mathbb{Z})^2$ is compact so that
  \[
  \partial_{x,x_1}h(x,x_1)\leq-\delta', \qquad (x,x_1)\in\mathcal{W}
  \]
 for some $\delta'>0$. Consider the $\mathcal{C}^{r-1}$-extension of $\partial_{x,x_1}h(x,x_1)$ to $\mathbb{R}^2$ satisfying the translation invariance under $T_{2\pi,2\pi}$ and keep denoting it $\partial_{x,x_1}h$. By continuity, there exists $\varepsilon>0$, $\delta'\geq\delta>0$ such that $\partial_{x,x_1}h\leq -\delta$ in the domain     
\[\mathcal{W}_{\varepsilon}=\left\lbrace(x,x_1)\in\RR^2 :  \mathcal{B}^-(x)-\varepsilon\leq x_1\leq \mathcal{B}^+(x)+\varepsilon\right\rbrace .\]
\noindent   Consider a $\mathcal{C}^{\infty}$ real valued function $\chi:\RR^2\rightarrow [0,1]$ such that $\chi(x+2\pi,x_1+2\pi)= \chi (x,x_1)$ and
\[
\left\lbrace \begin{array}{l}  \chi = 1  \qquad  (x,x_1)\in\mathcal{W} ,  \\ \chi = 0  \qquad  (x,x_1)\in \RR^2\setminus \mathcal{W}_{\varepsilon} .  \end{array} \right.
\]

\noindent Let's define the function
\[
\mathcal{D}(x,x_1):=\chi\partial_{x\,x_1}h-(1-\chi)\delta \ .
\]
Then by the definition of $\chi$ we have that $\mathcal{D}\in \mathcal{C}^{r-1}(\RR^2)$  and  $\mathcal{D}(x+2\pi,x_1+2\pi)= \mathcal{D} (x,x_1)$. Moreover, 
\[
\left\lbrace \begin{array}{l}  \mathcal{D} = \partial_{x\,x_1} h (x,x_1)  \qquad  (x,x_1)\in\mathcal{W}   \\ \mathcal{D} = -\delta  \qquad \qquad (x,x_1)\in \RR^2\setminus \mathcal{W}_{\varepsilon}.   \end{array} \right.
\]
In particular, with the hypotheses on $h$ we have :
\[
\mathcal{D}\leq -\delta<0 \quad (x,x_1)\in \RR^2\ .
\]
Now, let us consider the Cauchy problems for the wave equation (with periodic boundary conditions):
\begin{equation}\label{waveeq}
\left\lbrace \begin{array}{l} \partial_{x\,x_1} u (x,x_1)= \mathcal{D}(x,x_1),
  \\
  u(x, \mathcal{B}^{\pm}(x))=h(x, \mathcal{B}^{\pm}(x)),
  \\
(\partial_{x_1} u - \frac{1}{(\mathcal{B}^{\pm})'(x)}\partial_{x} u) (x, \mathcal{B}^{\pm}(x))= (\partial_{x_1} h -\frac{1}{(\mathcal{B}^{\pm})'(x)}\partial_{x} h) (x, \mathcal{B}^{\pm} (x)).
\end{array} \right.
\end{equation}

The change of variable
\[
t= \frac{x_1-\mathcal{B}^{\pm}(x)}{2}, \quad y=\frac{x_1+\mathcal{B}^{\pm}(x)}{2}
\]
conjugates system \eqref{waveeq} to the classical wave equation

\begin{equation}\label{waveeq2}
  \left\lbrace \begin{array}{l}
   v_{tt}-v_{yy}= f(t,y),
  \\
  v(0,y)=\phi(y)
  \\
  v_t(0,y)=\psi(y)
\end{array} \right.
\end{equation}
where, denoting  $x(t,y)=(\mathcal{B}^{\pm})^{-1}(y-t)$, $x_1(t,y)=t+y$,
\begin{align*}
  v(t,y) = u\left( x(t,y), x_1(t,y)  \right),  \quad  &f(t,y)= -\frac{4}{(\mathcal{B}^{\pm})'(x(t,y))}  \mathcal{D}\left(x(t,y),x_1(t,y)\right),
  \\
  \phi(y) = h(x(0,y),x_1(0,y)), \quad& \psi(y) = \left(\partial_{x_1} h -\frac{1}{(\mathcal{B}^{\pm})'(x(0,y))}\partial_{x} h\right) (x(0,y), x_1(0,y) ).
\end{align*}
Note that $f,\psi \in \mathcal{C}^{r}$, $\phi\in \mathcal{C}^{r+1}$ and $r\geq 2$ so that problem \eqref{waveeq2} has a unique solution $v(t,y)\in \mathcal{C}^r$ (see \cite{Mikha}). Moreover, since $f(t,y+2\pi)= f(t,y)$, $\phi(y+2\pi)=\phi(y)$ and $\psi(y+2\pi)=\phi(y)$, the solution satisfies $v(t,y+2\pi)=v(t,y)$. Undoing the change of variable, we get a unique solution $u\in \mathcal{C}^{r}(\RR^2)$ of problem \eqref{waveeq} such that $u(x+2\pi,x_1+2\pi)=u(x,x_1)$. Hence, setting $\tilde{h} = u$ proves the lemma. 

\end{proof}

Using the terminology introduced in Theorem \ref{MatherTh}, we recall some of the conclusions of Mather theory

\begin{theorem}[{\bf Mather \cite{Bangert,MatherForni}}]\label{Mather}
  Consider a $\mathcal{C}^2$ function $h:\RR^2\rightarrow \RR$ such that $h(x+2\pi,x_1+2\pi)=h(x,x_1)$ and $\partial^2_{xx_1}h\leq\bar\delta<0$ for all $(x,x_1)\in\RR^2$. Fix $\alpha \in \RR$. Then
     \begin{itemize}
     \item[(i)] if $\alpha=s/q\in\mathbb{Q}$ there exists an increasing sequence $(\overline{x}_n)_{n\in\mathbb{Z}}$ and an homeomorphism of the circle $g_\alpha$ such that
       \begin{itemize}
       \item $g_\alpha(\overline{x}_n) = \overline{x}_{n+1}$ and $\abs{\overline{x}_n - \overline{x}_0-2\pi n\alpha}<2\pi$ for every $n\in\ZZ$,
     \item
      $\partial_{1}h(\overline{x}_n,\overline{x}_{n+1})+ \partial_{2}h(\overline{x}_{n-1},\overline{x}_{n})=0$ and  $\overline{x}_{n+q}= \overline{x}_{n}+2\pi s$ for every  $n\in\mathbb{Z}$;  
      \end{itemize}

    \item[(ii)] If $\alpha \in \RR\setminus \mathbb{Q}$ there exists a set $M_\alpha$ of increasing sequences $x=(\overline{x}_n)_{n\in\mathbb{Z}}$ such that 
      \begin{itemize}
      \item if $x\in M_\alpha$ then $\partial_{1}h(x_n,x_{n+1})+ \partial_{2}h(x_{n-1},x_{n})=0$  for every $n\in\ZZ$, any two translates are comparable and $\abs{x_n - x_0-n2\pi\alpha}<2\pi$ for all $n\in \mathbb{Z}$,
        \item there exists a Lipschitz homeomorphism of the circle $g_\alpha$ with rotation number $\alpha$ and a closed set $A_\alpha\subset\RR$ such that $x\in M_\alpha$ iff $x_0\in A_\alpha$ and $g_\alpha^n(x_0)=x_{n}$ for all $n$, 
     \item the set $Rec(g_\alpha)\subset A_\alpha$ of recurrent points of $g_\alpha$ is either the whole $\RR$ or a Cantor set.
  \end{itemize}

    \end{itemize}
\end{theorem}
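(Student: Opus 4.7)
The plan is to implement the classical variational program of Aubry and Mather: seek \emph{minimal} configurations of the formal action $\sum_n h(x_n, x_{n+1})$, whose critical-point equation is precisely the stated discrete Euler--Lagrange relation $\partial_1 h(\overline{x}_n, \overline{x}_{n+1}) + \partial_2 h(\overline{x}_{n-1}, \overline{x}_n) = 0$. The twist bound $\partial^2_{xx_1} h \leq \bar\delta < 0$ plays a dual role: it yields coercivity of the action on finite slices, and, via Aubry's crossing lemma, it forces strong rigidity on minimizers.

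For the rational case $\alpha = s/q$, I would work in the finite-dimensional space of $(s,q)$-periodic sequences, parametrized by $(x_0,\dots,x_{q-1})\in \RR^q$ with action
\[
W(x_0,\dots,x_{q-1}) = \sum_{n=0}^{q-1} h(x_n, x_{n+1}), \qquad x_q := x_0 + 2\pi s.
\]
The diagonal translation $x\mapsto x+(2\pi,\ldots,2\pi)$ leaves $W$ invariant, and on the quotient the twist bound together with $2\pi$-periodicity in both variables yields a coercive estimate of the form $h(x,x_1)\geq \tfrac{|\bar\delta|}{2}(x_1-x)^2 - C$, so a minimum is attained. It is automatically a $\mathcal{C}^2$ critical point satisfying the Euler--Lagrange equation, and the rotation-type constraint $x_{n+q}=x_n+2\pi s$ is imposed by construction. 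The normalization $|\overline{x}_n-\overline{x}_0-2\pi n \alpha|<2\pi$ follows from comparing $(\overline{x}_n)$ to integer translates (using the crossing lemma below).

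The heart of the proof is \textbf{Aubry's crossing lemma}: two minimal configurations $(\overline{x}_n),(\overline{y}_n)$ cannot cross twice, i.e., $n\mapsto \overline{x}_n-\overline{y}_n$ has at most one sign change. One argues by cut-and-paste: if two crossings are observed, swap the tails of $(\overline{x}_n)$ and $(\overline{y}_n)$ between them and use the strict sign of $\partial^2_{xx_1} h$ to show that at least one of the two new sequences strictly decreases the action, contradicting minimality. Applied to $(\overline{x}_n)$ and its integer translates $(\overline{x}_{n+k}+2\pi m)$, this yields the monotonicity $\overline{x}_n<\overline{x}_{n+1}$ and the comparability of translates. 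Defining $g_\alpha(\overline{x}_n)=\overline{x}_{n+1}$ on the projection of the orbit, monotonicity allows extension to a homeomorphism of the circle, and the twist bound provides the Lipschitz constant in the irrational case.

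For the irrational case, approximate $\alpha$ by rationals $s_k/q_k\to\alpha$, pick periodic minimizers $x^{(k)}$ with $\overline{x}_0^{(k)}\in[0,2\pi)$, and use the a priori bound $|\overline{x}_n^{(k)}-\overline{x}_0^{(k)}-2\pi n s_k/q_k|<2\pi$ together with the crossing-lemma consequence of uniform Lipschitz control to extract a diagonal subsequence converging, coordinatewise, to a sequence $x=(\overline{x}_n)$ with rotation number $\alpha$. Minimality is preserved in the limit because the Euler--Lagrange condition and the non-crossing property are both closed. Taking $M_\alpha$ to be the collection of such limits, $A_\alpha$ the closure of their initial values, and $g_\alpha$ the Lipschitz circle map induced by the shift, gives the stated objects. \textbf{The main obstacle} is the sharp action-comparison step in Aubry's crossing lemma---the discrete sign-change bookkeeping and the use of strict twist to produce a strict action decrease---together with, at the end, the Denjoy-type dichotomy for a Lipschitz circle homeomorphism of irrational rotation number: its recurrent set is either all of $\RR$ (yielding the invariant curve) or a Cantor set, which is precisely the alternative asserted for $\mathrm{Rec}(g_\alpha)$.
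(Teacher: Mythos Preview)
Your outline is a faithful sketch of the classical Aubry--Mather variational argument as presented in the cited references (Bangert, Mather--Forni): periodic minimizers via coercivity of the action, Aubry's crossing lemma from the strict twist condition, approximation of irrational rotation numbers by rationals, and the Denjoy dichotomy for the induced circle homeomorphism. However, note that the paper does \emph{not} supply its own proof of this theorem---it is quoted as a known result from the literature and used as a black box in the proof of Theorem~\ref{MatherTh}, so there is no ``paper's own proof'' to compare against; your sketch is essentially the argument one finds in those references.
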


\begin{remark}
\em{
    We recall that the homeomorphism $g_\alpha$ satisfies $g_\alpha(x+2\pi)=g_\alpha(x)+2\pi$ for all $x\in\RR$ and the set $Rec(g_\alpha)$ is defined as the set of accumulation points of $\{g_\alpha^n(x)+2\pi k \: :\: (n,k)\in\ZZ^2 \}$ and is independent on the choice of the point $x\in\RR$. Moreover,  
  the condition $\abs{\overline{x}_n - \overline{x}_0-n2\pi\alpha}<2\pi, \; \forall n\in\mathbb{Z}$ implies that
  \[
  \frac{1}{2\pi}\lim_{n\to \infty}\frac{\overline{x}_n}{n}=\alpha,
  \]
  and $\alpha$ is called the rotation number of the orbit.
  }
  \end{remark}

We are now ready for the proof of our result.
\begin{proof}[Proof of Theorem \ref{MatherTh}]

  We apply Lemma \ref{generating} and get the generating function $h$ defined on the set
\[
\mathcal{B} = \left\lbrace(x,x_1)\in \RR^2:\alpha^-(x) < x_1-x <\alpha^+(x) \right\rbrace,
\]
and satisfying the corresponding properties {\it o)--iv)}.\\
Consider the case in which both $W^+,W^-$ are finite. 
Since $W^+ - W^->8\pi$, for every $\alpha$ such that
\[
W^- +4\pi<2\pi\alpha<W^+ - 4\pi,
\]
we can choose $\varepsilon>0$ such that
\[
W^-+\varepsilon < 2\pi\alpha -4\pi<2\pi\alpha+4\pi<W^+ - \varepsilon .
\] 

Now, consider the set
\[
\mathcal{W}=\left\lbrace(x,x_1)\in\RR^2 :  W^- + \varepsilon\leq x_1-x\leq W^+ - \varepsilon)\right\rbrace\subset \mathcal{B},
\]
and apply Lemma \ref{extension} with $\mathcal{B}^\pm(x):=x+W^\pm \mp \varepsilon$ that clearly are diffeomorphisms. We can extend the function $h$ to the whole $\RR^2$ getting a function $\tilde{h}$ satisfying the conditions in Theorem \ref{Mather} and such that $\tilde{h}=h$ in $\mathcal{W}$. 

%

By applying Theorem \ref{Mather} we obtain sequences $(\tilde{x}_n)$,
such that
\[
\partial_{1}\tilde{h}(\tilde{x}_n,\tilde{x}_{n+1})+ \partial_{2}\tilde{h}(\tilde{x}_{n-1},\tilde{x}_{n})= 0, \qquad \forall n\in \mathbb{Z}
\]
and
\[
\abs{\tilde{x}_n - \tilde{x}_0-n2\pi\alpha}<2\pi,\qquad \forall n\in \mathbb{Z}.
\]

From this inequality
 we obtain: 
\[
 2\pi\alpha - 4\pi <\tilde{x}_{n+1} - \tilde{x}_n<2\pi\alpha +4\pi, \qquad \forall n\in \mathbb{Z},
\]
that means that for every $n\in\mathbb{Z}$, $(\tilde{x}_{n+1}, \tilde{x}_n)\in\mathcal{W}$. But since $\tilde{h}=h$ in $\mathcal{W}$,
\[
\partial_{1}{h}(\tilde{x}_n,\tilde{x}_{n+1})+ \partial_{2}{h}(\tilde{x}_{n-1},\tilde{x}_{n})= 0, \qquad \forall n\in \mathbb{Z}.
\]
Hence, in case of rational $\alpha$ we define
\[
\tilde{r}_{n} = f^{-1}(- \partial_{1}h(\tilde{x}_n,\tilde{x}_{n+1}))=f^{-1}(- \partial_{1}h(\tilde{x}_n,g_\alpha(\tilde{x}_{n})))
\]
such that $(\tilde{x}_n,\tilde{r}_n)\subset\Sigma$ is the $(s,q)$-periodic orbit of $\Phi$.
In the irrational case, the set
$\tilde{\mathcal{M}}_\alpha$ is given by
\[
\tilde{\mathcal{M}}_\alpha =\{(\xi,\eta)\in\mathbb{R}^2 \: : \: \xi\in Rec(g_\alpha),\quad \eta = f^{-1}(- \partial_{1}h(\xi,g_\alpha(\xi)))  \}\subset\Sigma.
\]
Note that the Lipschitz regularity of $f^{-1}$ plays a role at this stage.

  In case $-\infty<W^-<W^+=\infty$ is enough to choose $\alpha$ such that $2\pi\alpha>W^-+ 4\pi$ and fix $M>2\pi\alpha + 8\pi$. Fix $\varepsilon$ such that
\[W^-+\varepsilon\leq 2\pi\alpha-4\pi \]
and apply extension lemma with
$\mathcal{B}^-(x) = x+W^- + \varepsilon$ and $\mathcal{B}^+(x) =x+ M $, in order to get the same result.

The other cases are similar.

\end{proof}


\section{Exact symplectic properties of the Poincar\'e map}
\label{sec:exact}

Fix $r\geq 2$ and a $\mathcal{C}^{r+1}$ function 
$ f:\, (a,b)  \longrightarrow  \RR $ such that $f'(r)$ never vanishes and consider the associated differential form
$\tilde{\lambda} = \mathrm{d}f(r) \wedge \mathrm{d}\theta = f'(r) \mathrm{d}r \wedge \mathrm{d}\theta$ on $(a,b)\times\mathbb{R}$.
In local coordinates, the corresponding time dependent Hamiltonian system takes the form  

\begin{equation} \label{PVI}
\left\lbrace \begin{array}{l}  \dot{r}= \frac{1}{f'(r)} \,  \partial_{\theta} H (t,r,\theta)  \\  \dot{\theta}=- \frac{1}{f'(r)} \, \partial_r H (t,r,\theta), \\ r(0)=r_0 \\\theta(0)=\theta_0.   \end{array} \right.
\end{equation}

\noindent Suppose that $H:\mathbb{R}\times (a,b)\times\mathbb{R}\rightarrow \mathbb{R}$ is continuous in $t$ and $\mathcal{C}^{r+1}$ in the phase variables $(r,\theta)$ and the following periodicity hold
\[
H (t+1,r,\theta)=H (t,r,\theta)\quad\mbox{and}\quad H (t,r,\theta+2\pi)=H (t,r,\theta).
\]
By the periodicity in $\theta$ we have the phase space is the cylinder $(a,b)\times\mathbb{T}$.  

\begin{remark}
\em{
  In our problem $(a,b)=(r_\ast,\infty)$, $f(r)=-\frac{1}{4r}$ and
  \[
  H (t,r,\theta)= - \frac{1}{2}\ln(2r) + \: p\left(t, \frac{\cos\theta}{\sqrt{2r}},-\frac{\sin\theta}{\sqrt{2r}} \right).
  \]
  }
\end{remark}

Let us consider the Poincar\'e map $\mathcal{P}(r_0,\theta_0) = (r_1,\theta_1)$ associated to  the Cauchy problem \eqref{PVI}. 
%
 By the hypothesis on $H$ and $f$, the map $\mathcal{P}$ belongs to $\mathcal{C}^r((a,b)\times\mathbb{R})$ and satisfies:
  \[
  \mathcal{P}(r_0,\theta_0+2\pi) = \mathcal{P}(r_0,\theta_0)+(0,2\pi).
  \]

\begin{lemma}\label{exactsymp}
  The Poincar\'e map $\mathcal{P}$ is exact symplectic with respect to the form $\tilde{\lambda}$.
\end{lemma}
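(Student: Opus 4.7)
The plan is to exhibit an explicit primitive for $\mathcal{P}^*(f(r)\,\mathrm{d}\theta) - f(r)\,\mathrm{d}\theta$ as a classical action integral along the Hamiltonian trajectories of \eqref{PVI}. For $(r_0,\theta_0)$ in the domain of $\mathcal{P}$, let $(r(t),\theta(t))$ be the solution of \eqref{PVI} with initial condition $(r_0,\theta_0)$, and define
\[
\mathcal{S}(r_0,\theta_0) := \int_0^1\bigl[f(r(t))\,\dot\theta(t) + H(t,r(t),\theta(t))\bigr]\,\mathrm{d}t.
\]
The $\mathcal{C}^r$ regularity of $\mathcal{S}$ will follow from the standard smooth dependence of the flow on initial data together with the regularity of $H$ and $f$, and its $2\pi$-periodicity in $\theta_0$ from the periodicity of $H$ in $\theta$ combined with uniqueness, which together imply that shifting $\theta_0$ by $2\pi$ merely translates the trajectory by $2\pi$ in $\theta$ while leaving $r(t)$ unchanged.

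The heart of the argument is then to differentiate $\mathcal{S}$ under the integral sign. Letting $(\delta r,\delta\theta)$ denote the variation of the trajectory induced by a variation of the initial data, and integrating the $f(r)\,\delta\dot\theta$ term by parts, one produces a boundary contribution $\bigl[f(r)\delta\theta\bigr]_0^1$ together with a bulk integral. Substituting the identities $\partial_\theta H = f'(r)\dot r$ and $\partial_r H = -f'(r)\dot\theta$ furnished by \eqref{PVI} causes the bulk terms to cancel identically, leaving
\[
\delta\mathcal{S} = f(r_1)\,\delta\theta_1 - f(r_0)\,\delta\theta_0,
\]
i.e.\ $\mathrm{d}\mathcal{S} = f(r_1)\,\mathrm{d}\theta_1 - f(r_0)\,\mathrm{d}\theta_0$, which is the required exactness. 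Projecting to the cylinder is then automatic from the periodicity already established.

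I do not anticipate a substantive obstacle: the computation is a textbook generating-function identity. The minor subtleties are bookkeeping. First, one must identify the correct sign of the Lagrangian $L = f(r)\dot\theta + H$, which is opposite to the customary $p\dot q - H$; this is because the Liouville primitive $f(r)\,\mathrm{d}\theta$ of $\tilde\lambda = \mathrm{d}f(r)\wedge\mathrm{d}\theta$ is oriented oppositely to the standard $\mathrm{d}\theta\wedge\mathrm{d}(f(r))$, which manifests in \eqref{PVI} as a reversed sign in the equation for $\dot\theta$. Second, one must ensure that the trajectory remains inside $(a,b)\times\mathbb{T}$ on the whole interval $[0,1]$ so that $\mathcal{S}$ is well defined, which in the concrete setting of Section \ref{sec:preest} is guaranteed by Lemma \ref{evosol}.
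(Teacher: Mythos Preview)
Your proposal is correct and is essentially the same argument as the paper's: the paper defines $\mathcal{S}(r_0,\theta_0)=-\int_0^1\bigl[\tfrac{f(r)}{f'(r)}\partial_r H - H\bigr]\,\mathrm{d}t$, which upon substituting $\partial_r H=-f'(r)\dot\theta$ from \eqref{PVI} coincides with your $\int_0^1[f(r)\dot\theta+H]\,\mathrm{d}t$, and then differentiates under the integral and integrates by parts exactly as you describe. Your identification of the sign convention and the well-definedness caveat via Lemma~\ref{evosol} are also on point.
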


\begin{proof}

    To simplify the notation, let us denote $(r(t),\theta(t))$ the solution \\$(r(t;r_0,\theta_0), \theta(t;r_0,\theta_0))$  of (\ref{PVI}).
    Consider the $\mathcal{C}^r$ function
    \[\mathcal{S}(r_0,\theta_0) = -\int_0^1 \left[\frac{f(r(t))}{f'(r(t))} \partial_r H(t, r(t), \theta(t) ) - H(t, r(t), \theta(t)) \right]\mathrm{d}t, \]
    and note that by the periodicity assumptions in $\theta$, and the uniqueness, we have 
    \[\mathcal{S}(r_0,\theta_0+2\pi) = \mathcal{S}(r_0,\theta_0).
    \]
	Let us now prove that  
    \[
    \mathrm{d}\mathcal{S}(r_0,\theta_0)= f(r_1)\,\mathrm{d}\theta_1 - f(r_0)\,\mathrm{d}\theta_0.
    \]
    We start with
    \begin{align} \label{desexact}
          \nonumber \partial_{r_0} & \mathcal{S}(r_0,\theta_0) = -\int_0^1 \left[ - \frac{f(r(t))f''(r(t))}{[f'(r(t))]^2} [\partial_{r_0} r(t)] \partial_r H(t, r(t), \theta(t))  \right.\\  & \left. +\frac{f(r(t))}{f'(r(t))} \partial_{r_0}\left[\partial_r H(t, r(t), \theta(t))\right] -  \partial_{\theta} H(t, r(t), \theta(t))\left[\partial_{r_0}\theta(t)\right]\right]\,\mathrm{d}t.
    \end{align}
Note that the term $\partial_{r} H(t, r(t), \theta(t))[\partial_{r_0}r(t)] $ is canceled. Now, using the first equation in \eqref{PVI}, and integrating by parts the last term, we obtain
  \begin{align*} 
   \int_0^1 \partial_{\theta} H(t, r(t), \theta(t))&\left[\partial_{r_0}\theta(t)\right]\,\mathrm{d}t =  \int_0^1  \dot{f}(r(t)) \left[\partial_{r_0}\theta(t)\right]\, \mathrm{d}t \\ &= \left[  f(r(t)) \partial_{r_0}\theta(t) \right]_{t=0}^{t=1} - \int_0^1  f(r(t)) \left[\partial_{r_0}\dot{\theta}(t)\right] \, \mathrm{d}t.
  \end{align*}
  Replacing in  \eqref{desexact} and using the second equation in \eqref{PVI}, we get
     \[
          \partial_{r_0} \mathcal{S}(r_0,\theta_0) = \left[ f(r(t)) \partial_{r_0}\theta(t) \right]_{t=0}^{t=1}.
   \]   
    Analogously,
      \[
      \partial_{\theta_0} \mathcal{S}(r_0,\theta_0) = \left[ f(r(t))  \partial_{\theta_0}\theta(t) \right]_{t=0}^{t=1}.
      \]
      Hence,
      \begin{align*}
        \mathrm{d}\mathcal{S}(r_0,\theta_0) &=
      \partial_{r_0} \mathcal{S}\,\mathrm{d}r_0 + \partial_{\theta_0} \mathcal{S}\,\mathrm{d}\theta_0
        = \left[ f(r_1)
\partial_{r_0}\theta_1  - f(r_0) \partial_{r_0}\theta_0  \right]\mathrm{d}r_0 + \\ & +
       \left[ f(r_1) \partial_{\theta_0}\theta_1  -  f(r_0) \partial_{\theta_0}\theta_0 \right] \mathrm{d}\theta_0 = f(r_1)\,\mathrm{d}\theta_1 - f(r_0)\,\mathrm{d}\theta_0.
      \end{align*}
\end{proof}

\section{The twist property for the vortex problem}
\label{sec:twist}



In this section we consider the Poincar\'e map $\cal{P}$ associated to system \eqref{cuatro}. We recall the notation
    
\[ \begin{array}{rcl} \mathcal{P}: \,\,\Sigma(a_\ast)=]a_\ast,+\infty[\times\mathbb{T} & \longrightarrow & \RR^2 \\ (r_0,\theta_0) & \longmapsto & (r_1,\theta_1)= \left(\mathcal{F}(r_0,\theta_0),\mathcal{G}(r_0,\theta_0)\right). \end{array}\]

The following theorem will clearly imply the twist condition \eqref{deftwist},

    \begin{theorem}\label{twisttheo}
     Suppose that $p\in \mathcal{R}^{2}_{\varepsilon}$ and the origin is a zero of order $4$.
Then
    \begin{equation}\label{twist}
      \frac{\partial\mathcal{G}}{\partial r_0}\underset{r_0\to\infty}{\longrightarrow} 2\, , \,\, \mbox{uniformly in }  \theta_0.
    \end{equation}
    
\end{theorem}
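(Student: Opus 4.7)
My plan is to differentiate the second component of the Poincar\'e map with respect to $r_0$ and write, via the variational equation associated to \eqref{cuatro},
\[
\partial_{r_0}\mathcal{G}(r_0,\theta_0)=\int_0^1\bigl[\,2\,\partial_{r_0}r(t)+\partial_r G\,\partial_{r_0}r(t)+\partial_\theta G\,\partial_{r_0}\theta(t)\,\bigr]\,dt,
\]
where $(\partial_{r_0}r,\partial_{r_0}\theta)$ solves the first-variation system along the trajectory issuing from $(r_0,\theta_0)$ with initial data $(1,0)$. I then want to show that the two $G$-terms vanish uniformly in $\theta_0$ as $r_0\to\infty$ and that the principal term $2\int_0^1\partial_{r_0}r\,dt$ tends to $2$.

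The first step is to quantify the decay of $F,G$ and their first partial derivatives along the trajectory. Writing $p=T_4+\tilde p$ from Definition \ref{deforder} and exploiting the homogeneity identity
\[
T_4\!\left(t,\tfrac{\cos\theta}{\sqrt{2r}},-\tfrac{\sin\theta}{\sqrt{2r}}\right)=\frac{Q(t,\theta)}{(2r)^{2}},\qquad Q(t,\theta):=T_4(t,\cos\theta,-\sin\theta),
\]
one isolates in $F$ and $G$ a principal part depending only on $(t,\theta)$ and obtains
\[
F=Q_\theta+O(r^{-1/2}),\qquad G=\frac{2Q}{r}+O(r^{-3/2}),
\]
together with, after differentiation and use of the derivative bounds on $\tilde p$,
\[
|\partial_rF|=O(r^{-3/2}),\quad \partial_\theta F=Q_{\theta\theta}+O(r^{-1/2}),\quad |\partial_rG|=O(r^{-2}),\quad |\partial_\theta G|=O(r^{-1}).
\]
By Lemma \ref{evosol}, for $r_0$ large enough these bounds transfer along the trajectory with $r$ replaced by a constant multiple of $r_0$. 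Gronwall applied to the variational system then gives $|\partial_{r_0}r|,|\partial_{r_0}\theta|\le M$ uniformly in $r_0,\theta_0$ and $t\in[0,1]$, so that the two $G$-contributions to $\partial_{r_0}\mathcal{G}$ are already $O(r_0^{-1})$.

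The remaining task is to prove $\int_0^1\partial_{r_0}r(t)\,dt\to 1$ uniformly. Integrating the equation for $\partial_{r_0}r$,
\[
\partial_{r_0}r(t)-1=\int_0^t\bigl[\,\partial_rF\cdot\partial_{r_0}r+\partial_\theta F\cdot\partial_{r_0}\theta\,\bigr]\,ds;
\]
the $\partial_rF$-contribution is safely $O(r_0^{-3/2})$, but $\partial_\theta F$ is only bounded, not small. This is where I expect the main obstacle. The saving feature is the rapid rotation $\dot\theta(s)=2r(s)+G(s,r,\theta)=2r_0+O(1)$, which turns the oscillating factor $Q_{\theta\theta}(s,\theta(s))$ into a near-derivative,
\[
Q_{\theta\theta}(s,\theta(s))=\frac{1}{\dot\theta(s)}\!\left[\frac{d}{ds}Q_\theta(s,\theta(s))-Q_{t\theta}(s,\theta(s))\right].
\]
Integrating by parts against the slowly varying multiplier $\partial_{r_0}\theta(s)/\dot\theta(s)$, whose derivative is $O(r_0^{-1})$ by the variational bounds just established, produces boundary and residual terms all of size $O(r_0^{-1})$. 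Combined with the $O(r_0^{-1/2})$ error coming from the $\tilde p$-part of $\partial_\theta F$, this gives $\partial_{r_0}r(t)=1+O(r_0^{-1/2})$ uniformly in $t$ and $\theta_0$, and consequently $\partial_{r_0}\mathcal{G}(r_0,\theta_0)=2+O(r_0^{-1/2})\to 2$, establishing \eqref{twist}.
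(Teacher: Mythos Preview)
Your argument is correct and rests on the same mechanism as the paper's proof: both isolate the single non-small entry $\partial_\theta F_\ast=Q_{\theta\theta}$ of the linearized vector field and kill its contribution by an oscillatory-integral (Riemann--Lebesgue) argument exploiting $\dot\theta\sim 2r_0$. The difference is one of packaging. The paper formalizes the splitting as $\mathcal{M}=A+B+C$ with $\|C\|\to 0$ uniformly and $b_{12}\rightharpoonup 0$ weak$^*$ in $\mathcal{L}^\infty$ (Lemma~\ref{lemsplit} together with the Riemann--Lebesgue Lemma~\ref{RL}), and then invokes an abstract continuity result (Lemma~\ref{lemconv}, via Ascoli--Arzel\`a) asserting that weak$^*$ convergence of bounded coefficients yields uniform convergence of the fundamental matrix to $\mathcal{Y}_{int}$. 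You instead perform the integration by parts directly against the actual multiplier $\partial_{r_0}\theta$ arising from the variational equation. Your route is more elementary and gives an explicit rate $O(r_0^{-1/2})$, while the paper's modular setup separates the oscillatory estimate from the ODE convergence and would be reusable in related problems.

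One small caveat: you integrate by parts against $\partial_{r_0}\theta(s)/\dot\theta(s)$ and claim its derivative is $O(r_0^{-1})$, but under the stated hypothesis $p\in\mathcal{R}^2_\varepsilon$ the function $s\mapsto G(s,r(s),\theta(s))$ need not be differentiable in $s$ (only continuity in $t$ is assumed), so $\ddot\theta$ may fail to exist. This is easily repaired by replacing $\dot\theta$ by the constant $2r_0$ in the denominator, absorbing the bounded difference $\dot\theta-2r_0=2(r-r_0)+G=O(1)$ into an additional $O(r_0^{-1})$ error; then $\partial_{r_0}\theta/(2r_0)$ is $\mathcal{C}^1$ with derivative $O(r_0^{-1})$ and the integration by parts is legitimate. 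The paper avoids this issue because its Lemma~\ref{RL} integrates against a fixed $\mathcal{C}^\infty$ test function and then passes to $\mathcal{L}^1$ by density inside the weak$^*$ convergence lemma.
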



    To prove the theorem, let us fix a solution $(r(t;\theta_0,r_0),\theta(t;\theta_0,r_0))$ of problem \eqref{cuatro}. Note that, since $p\in \mathcal{R}^{2}_{\varepsilon}$, the vector field of system \eqref{cuatro} is $\mathcal{C}^1$ in the variables $(r,\theta)$ so that the solution is unique and 
    we can consider the associated variational equation
    \begin{equation} \label{vareq}
\left\lbrace \begin{array}{l} \dot{\mathit{Y}}=
\mathcal{M} (t,r(t;r_0,\theta_0),\theta(t;r_0,\theta_0)) \mathit{Y},  \\  \mathit{Y}(0)=\mathbb{I}_2  \end{array} \right.
    \end{equation}
    where
    \[
    \mathcal{M}(t;r,\theta) = \frac{\partial\, (F(t,r,\theta),2r + G(t,r,\theta)) }{\partial\,(r,\theta)}
    \]
is the Jacobian of the vector field in \eqref{cuatro}.
We denote the matrix solution \[ \mathcal{Y}(t;r_0,\theta_0) = \begin{pmatrix} \partial_{r_0}r(t;r_0,\theta_0) & \partial_{\theta_0}r(t;r_0,\theta_0) \\\partial_{r_0}\theta(t;r_0,\theta_0) & \partial_{\theta_0}\theta(t;r_0,\theta_0) \end{pmatrix}\]
and by the definition of the Poincar\'e map,
\[
\frac{\partial\mathcal{G}}{\partial r_0}(r_0,\theta_0) = \partial_{r_0}\theta(1;r_0,\theta_0).
\]
In the integrable case $p=0$, the Jacobian matrix is \\$A=\begin{pmatrix} 0 & 0 \\  2& 0     \end{pmatrix}$ 
and the solution of the corresponding variational equation is
\begin{equation}\label{solinit}
\mathcal{Y}_{int}(t;r_0,\theta_0)= \begin{pmatrix} 1 & 0 \\2t& 1 \end{pmatrix},
\end{equation}
that shows that the Poincar\'e map of the unperturbed problem is twist.

To prove the result in the non integrable case, we will follow a perturbative approach. More precisely, we will prove that the solution remains close to that of the integrable case over a period $t\in[0,1]$. For this purpose we begin considering the following splitting. To simplify the notation we will denote a solution of \eqref{cuatro} by $(r(t),\theta(t))$ where we have dropped the dependence on the initial conditions.

\begin{lemma}\label{lemsplit}
Under the hypothesis of Theorem \eqref{twisttheo}, we have the following splitting:
\[\mathcal{M}(t;r(t),\theta(t)) = A + B(t,r_0, \theta_0) + C(t,r_0,\theta_0)\]
where $B(t,r_0,\theta_0) $ is bounded, the entries satisfy $b_{11}=b_{21}=b_{22}=0$ and $\forall\varphi\in \mathcal{C}^{\infty}([0,1])$ 
 \begin{equation}\label{propint}
   \abs{ {\int_0^{t} b_{12}(s,r_0,\theta_0)\,\varphi(s) \, \mathrm{d}s}}
  \underset{r_0\to\infty}{\longrightarrow} 0 \quad \mbox{uniformly in } t\in [0,1], \theta_0\in\TT.
 \end{equation}

Moreover, 

\begin{equation*}
\Vert C (t,r_0,\theta_0)\Vert  \underset{r_0\to\infty}{\longrightarrow} 0 \quad \mbox{uniformly in } t\in [0,1], \theta_0\in\TT.
\end{equation*}
\end{lemma}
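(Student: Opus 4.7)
The plan is to decompose $\mathcal{M}$ into the constant piece $A$ plus a bounded piece $B$ carrying the sole persistent entry and a remainder $C$ that decays as $r_0\to\infty$; the oscillation in \eqref{propint} will then come from the fast monotone rotation $\dot\theta\sim 2r_0$ guaranteed by \eqref{lemma1}.

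I would start by splitting $p=T_4+\tilde p$ as in Definition \ref{deforder}. Since $T_4$ is homogeneous of degree $4$ and $(\phi_1,\phi_2)$ are of order $r^{-1/2}$, one obtains the key cancellation
\[
T_4(t,\phi_1,\phi_2)=\frac{q(t,\theta)}{4r^2},\qquad q(t,\theta):=\sum_{i+j=4}\alpha_{i,j}(t)\cos^i\theta(-\sin\theta)^j,
\]
so the $4r^2$-prefactor defining $F$ and $G$ exactly kills the $r$-dependence of the $T_4$-contribution. Using the order-$5$ bounds $|\partial\tilde p|=O(r^{-2})$, $|\partial^2\tilde p|=O(r^{-3/2})$ on $\mathcal{D}$ (available since $p\in\mathcal{R}^2_\varepsilon$), a direct chain-rule computation gives
\[
F(t,r,\theta)=F_0(t,\theta)+F_1(t,r,\theta),\qquad G(t,r,\theta)=\frac{G_0(t,\theta)}{r}+\tilde G(t,r,\theta),
\]
where $F_0=\partial_\theta q$ and $G_0=2q$ are trigonometric polynomials of degree $\leq 4$ with $\mathcal{C}^1$ coefficients in $t$, while $|F_1|+|\partial_\theta F_1|=O(r^{-1/2})$, $|\partial_r F_1|=O(r^{-3/2})$, and $|\tilde G|+|\partial_r\tilde G|+|\partial_\theta\tilde G|=O(r^{-3/2})$.

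Computing the Jacobian entries of $\mathcal{M}-A$ and evaluating along the solution $(r(t),\theta(t))=(r(t;r_0,\theta_0),\theta(t;r_0,\theta_0))$ yields $\partial_r F=O(r^{-3/2})$, $\partial_r G=O(r^{-2})$, $\partial_\theta G=O(r^{-1})$, and $\partial_\theta F=\partial_\theta F_0(t,\theta)+O(r^{-1/2})$. I then set
\[
B(t,r_0,\theta_0):=\begin{pmatrix}0 & \partial_\theta F_0(t,\theta(t;r_0,\theta_0))\\ 0 & 0\end{pmatrix},\qquad C:=\mathcal{M}-A-B.
\]
The matrix $B$ has the required shape and is bounded because $\partial_\theta F_0$ is a bounded trigonometric polynomial; moreover, by Lemma \ref{evosol}, $r(t)\geq r_0-C_1$ uniformly for $(t,\theta_0)\in[0,1]\times\TT$, so every entry of $C$ is uniformly $O(r_0^{-1/2})\to 0$.

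The main obstacle is the oscillatory estimate \eqref{propint}. Writing $\theta(s;r_0,\theta_0)=\theta_0+2r_0 s+\eta(s)$, by \eqref{lemma1} one has $|\eta(s)|\leq K$, and $\dot\eta(s)=2(r(s)-r_0)+G(s,r(s),\theta(s))$ is continuous in $s$ and bounded uniformly in $(r_0,\theta_0)$ by Lemma \ref{evosol}. Expanding the trigonometric polynomial
\[
\partial_\theta F_0(t,\theta)=\sum_{0<|k|\leq 4}c_k(t)\,e^{ik\theta}
\]
(the zero Fourier mode vanishes because $F_0$ is $2\pi$-periodic in $\theta$), the integral in \eqref{propint} becomes a finite sum of expressions of the form
\[
e^{ik\theta_0}\int_0^t \psi_k(s)\,e^{2ikr_0 s}\,\mathrm{d}s,\qquad \psi_k(s):=c_k(s)\,\varphi(s)\,e^{ik\eta(s)}.
\]
Since $c_k\in\mathcal{C}^1$, $\varphi\in\mathcal{C}^\infty$ and $\eta\in\mathcal{C}^1$ with uniformly bounded derivative, $\psi_k$ is $\mathcal{C}^1$ with $\|\psi_k\|_\infty+\|\psi_k'\|_\infty$ bounded independently of $(r_0,\theta_0)$. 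A single integration by parts against $e^{2ikr_0 s}$ then produces the bound $O(1/(|k|r_0))$, uniformly in $t\in[0,1]$ and $\theta_0\in\TT$; summing over the finitely many modes proves \eqref{propint}. The subtle point is that, because $p\in\mathcal{R}^2_\varepsilon$ is only continuous in $t$, the function $\dot\theta(s)$ need not be $\mathcal{C}^1$ and a direct integration by parts relying on $\ddot\theta$ would fail; extracting the explicit fast phase $e^{2ikr_0 s}$ reduces the required regularity to that of $\eta$, which depends only on the continuity of $r(\cdot)$ and $G(\cdot,r(\cdot),\theta(\cdot))$ along the solution.
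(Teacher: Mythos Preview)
Your proof is correct and follows essentially the same route as the paper: the same splitting $p=T_4+\tilde p$, the same identification $b_{12}=\partial_\theta F_0(t,\theta(t))$ with $F_0=\partial_\theta q$ a trigonometric polynomial, the same placement of $\partial_r F,\ \partial_\theta\tilde F,\ \partial_r G,\ \partial_\theta G$ into $C$ with decay read off from the order-$4$ bounds and Lemma~\ref{evosol}, and the same oscillatory mechanism via $\theta(s)=\theta_0+2r_0 s+\eta(s)$ with $\eta,\dot\eta$ uniformly bounded. The only cosmetic difference is that the paper packages the integration by parts against $e^{2ikr_0 s}$ into a separate Riemann--Lebesgue lemma (Lemma~\ref{RL}), whereas you carry it out in-line; your observation that the argument needs only $\eta\in\mathcal C^1$ and not $\ddot\theta$ is exactly what makes both versions work under the $\mathcal R^2_\varepsilon$ hypothesis.
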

\begin{proof}

Since the origin is a zero of order $4$ for $p$, we can split the perturbation as
\[ p(t,x,y) = T_4(t,x,y) + \tilde{p}(t,x,y),\]
where $T_4$ is a homogeneous polynomial of degree $4$. From system \eqref{cuatro} 
\[
F(t,r,\theta) = 4r^2 \partial_{\theta}p\left[\left(t, \frac{\cos \theta}{\sqrt{2r}}, \frac{-\sin \theta}{\sqrt{2r}}\right)\right],
\]
so that $p$ induce the following splitting on $F$
\begin{equation*}
F(t,r,\theta) = F_{\ast}(t,\theta) + \tilde{F}(t,r,\theta),
\end{equation*}
where, using the homogeneity of $T_4$ w.r.t the variable $r$,
\[
F_{\ast} (t,\theta) = \partial_{\theta}[ T_4\left(t, \cos \theta , -\sin \theta\right)], \quad
\tilde{F}(t, r, \theta) = 4r^2 \partial_{\theta}\left[\tilde{p}\left(t, \frac{\cos \theta}{\sqrt{2r}}, \frac{-\sin \theta}{\sqrt{2r}}\right)\right].
\]

Therefore we write
\begin{align*}
\mathcal{M}(t,r(t),\theta(t)) &=  \left.\begin{pmatrix} \partial_{r}F(t,r,\theta) & \partial_{\theta}F(t,r,\theta) \\ 2 + \partial_{r}G(t,r,\theta)& \partial_{\theta}G(t,r,\theta)\end{pmatrix} \right\vert_{(r(t),\theta(t))}
\\ & = \begin{pmatrix} 0 & 0 \\2& 0 \end{pmatrix} +\left[\begin{pmatrix}0 & b_{12} \\ 0 & 0 \end{pmatrix} +  \begin{pmatrix} c_{11} & c_{12} \\c_{21}& c_{22} \end{pmatrix}\right]_{(r(t),\theta(t))}
\end{align*}
where $b_{12}=b_{12}(t,r,\theta)$ and $c_{ij}=c_{ij}(t,r,\theta)$ defined as 
\[
b_{12}   := \partial_{\theta} F_{\ast}(t,\theta)=\partial_{\theta \theta} \left[T_4\left(t, \cos \theta , -\sin \theta \right)\right]
\]
and
\begin{align*}
  c_{11}  & := \partial_{r}\tilde{F}(t,r,\theta) 
  \qquad&   c_{12}   := \partial_{\theta} \tilde{F}(t,r,\theta) 
  \\
   c_{21}  & :=  \partial_{r}G(t,r,\theta) 
 \qquad&  c_{22}   := \partial_{\theta}G(t,r,\theta)
\end{align*} 

\noindent Concerning the matrix $C$, one can first explicitly write the expression of the entries $c_{ij}$ recalling that $\tilde{F}$ has just been introduced and $G$ is defined in \eqref{defg}. Note that they depend on the derivatives up to second order of the functions $p\left(t, \frac{\cos \theta}{\sqrt{2r}}, \frac{-\sin \theta}{\sqrt{2r}}\right)$, $\tilde{p}\left(t, \frac{\cos \theta}{\sqrt{2r}}, \frac{-\sin \theta}{\sqrt{2r}}\right)$   w.r.t the variables $(r,\theta)$. These derivatives are estimated in Lemma \ref{lemder} of Appendix A and can be used to get the following estimate
\[
r^{3/2} \abs{c_{11}} + r^{1/2}\abs{c_{12}} + r^2 \abs{c_{21}} + 
r \abs{c_{22}} 
\leq K
\]
with $K$ independent on $r,\theta$. We evaluate the entries $c_{ij}$ on a solution $(r(t),\theta(t))$ and we remember that from Lemma \ref{evosol} we have that for $r_0>a_\ast$
\begin{equation}
\abs{r(t)- r_0} \leq K_1, \:\quad \forall \: \theta_0 \in \TT \: \textnormal{and} \: t\in [0,1].
\end{equation}
This proves that
$ \Vert C (t,r_0,\theta_0)\Vert \longrightarrow 0$ as $r_0\to\infty$ uniformly in $\theta_0\in \TT, t\in[0,1]$.

Let us study the matrix $B$. Since $T_4\left(t, \cos \theta , -\sin \theta\right)$ is a trigonometric polynomial, $\abs{b_{12}}\leq K$ and $B$ is bounded.
To obtain \eqref{propint} we note that since $T_4\left(t, \cos \theta , -\sin \theta\right)$ is a trigonometric polynomial of degree $4$, $\partial_{\theta} \left[T_4\left(t, \cos \theta , -\sin \theta\right)\right]$ will be another trigonometric polynomial (of the same degree) that we denote $\tilde{T}_4\left(t, \cos \theta , -\sin \theta\right)$. 
Let us define
\[
P_4 (t, \eta, \xi) := \tilde{T}_4(t, \eta , \xi)
\]
We show that we can apply Lemma \ref{RL} (see Appendix A) choosing the polynomial of degree $N = 4$
\[q(t,\eta,\xi) = -\xi\frac{\partial P_4}{\partial \eta}(t,\eta,-\xi) - \eta\frac{\partial P_4}{\partial \xi}(t,\eta,-\xi).
\]
Since  $P_4 (t, \cos \theta, -\sin \theta)$ is a periodic primitive in the variable $\theta$ of the function  $q (t, \cos \theta, \sin \theta)$ the condition \eqref{izero} holds. Moreover, the regularity assumption in Definition \ref{deforder} guarantee that $q$ has $\mathcal{C}^1$  coefficients in the variable $t$.

\noindent Let us define the function $\beta (t) := \theta (t) -  2r_0t$. 
The estimate \eqref{lemma1} on the angular evolution 
gives a bound of $\norm{\beta}_{\infty}$. To get a bound of  $\dot{\norm{\beta}}_{\infty}$ we observe that
\[
\dot{\beta}(t) = \dot{\theta}(t) - 2r_0 = 2\left( r(t) - r_0 \right) + G(t,r(t),\theta(t)).
\]
And again from \eqref{lemma1} and \eqref{bound33}, we have 
\[
\dot{\norm{\beta}}_{\infty} \leq 2K + \frac{C_1}{(r_0-K)}.
\]
Finally, Lemma \ref{RL} in Appendix A can be applied to deduce that for all $\varphi \in \mathcal{C}^{\infty}([0,1])$ and $t\in [0,1]$, 
\begin{align*}
  \abs{\int_0^tb_{12}(s,r_0,\theta_0)\varphi(s)\mathrm{d}s }&=\abs{\int_0^tb_{12}(s,\theta(s))\varphi(s)\mathrm{d}s } \\
  &=\abs{\int_0^t\partial_{\theta \theta} T_4\left(s, \cos \theta(s) , -\sin \theta(s) \right)\varphi(s)\mathrm{d}s }
  \\
  & = \abs{\int_0^t \partial_\theta P_4 (s, \cos\theta(s),-\sin\theta(s))\varphi(s) \mathrm{d}s} \\
  &=\abs{\int_0^t q(s, \cos\theta(s),\sin\theta(s))\varphi(s) \mathrm{d}s} \\
  &=\abs{\int_0^t q(s, \cos(2r_0s+\beta(s)),\sin(2r_0s+\beta(s)))\varphi(s) \mathrm{d}s}\\
  &\leq \frac{C_{RL} }{2r_0} .
\end{align*}
\end{proof}

Let us write the variational equation \eqref{vareq} as

\begin{equation*}
\left\lbrace \begin{array}{l} \dot{\mathit{Y}}=
\left(  A + B(t,r_0, \theta_0) + C(t,r_0,\theta_0) \right) \mathit{Y},  \\  \mathit{Y}(0)=\mathbb{I}_2.  \end{array} \right.
\end{equation*}
We claim that the solution $\mathcal{Y} (t;r_0,\theta_0)$ converge uniformly,  as $r_0 \to \infty$, to the solution $\mathcal{Y}_{int}(t;r_0,\theta_0)$ of the integrable case \eqref{solinit}.

We prove it applying Lemma \ref{lemconv} of Appendix to the family of matrices $M(t,r_0,\theta_0)=B(t,r_0, \theta_0) + C(t,r_0,\theta_0)$ as $r_0\to +\infty$. From Lemma \ref{lemsplit} we have that $M(t,r_0,\theta_0)$ is uniformly bounded and $C(t,r_0,\theta_0)$ converge uniformly to $0$ so that it converge also in the weak* topology. To study the convergence of the matrix $B(t,r_0, \theta_0)$ it is enough to consider the term $b_{12}$.  
From \eqref{propint} and using the density of $\mathcal{C}^\infty$ in $\mathcal{L}^1$ we have
 \begin{equation*}
\abs{\int_0^{t} b_{12}(s,r_0,\theta_0)\,\varphi(s) \, \mathrm{d}s}  \underset{r_0\to\infty}{\longrightarrow} 0,  \qquad \forall\varphi\in \mathcal{L}^{1}([0,1]) ,\quad t\in [0,1].
 \end{equation*}
Hence, we can apply Lemma \ref{lemconv} and get 
\[ \mathcal{Y} (t;r_0, \theta_0)
\underset{r_0\to\infty}{\longrightarrow}
\mathcal{Y}_{int}(t;r_0,\theta_0) \quad \mbox{uniformly in } t\in [0,1], \theta_0\in\TT,\]
from which \eqref{twist} follows evaluating in $t=1$.

\section{Proof of the main Theorem }
\label{sec:proof}

In this section we apply Theorem  \ref{MatherTh} and Corollary \ref{MatherCor} to the Poincar\'e map $\cal{P}$ of system \eqref{cuatro} and get the so called Aubry-Mather orbits of rotation number $\alpha$. These orbits determine the solutions we announced in our main theorem \ref{Theo}.

By Lemma \ref{evosol} the map $\cal{P}$ is well defined in $\Sigma(a_*)$ and is a $\mathcal{C}^2$-diffeomorphism since $p\in\mathcal{R}^3_\varepsilon$. For every initial condition in $\Sigma(a_*)$, the corresponding solution satisfies $|r(t)-r_0|\leq K$ for all $t\in[0,1]$. Moreover, from \eqref{bound33}, we can find $a_1>0$ such that, in $\Sigma(a_1)$,
\[
|\dot\theta| = |2r+G(t;r,\theta)| \geq 2r-\frac{C_1}{2r} > 0.
\]
By theorem \ref{twisttheo}, the map $\cal{P}$ is twist in $\Sigma(a_2)$ for some $a_2$ large enough.
Let us consider the strip $\Sigma(\bar{r})$ where $\bar{r}=\max\{a_*,a_1,a_2\}+K$.
Theorem \ref{twisttheo} also imply that the following limits hold: 
  \begin{align*}
W^+ :=\min_x \{ \lim_{r \to +\infty} \mathcal{G}(r,x)- x \}= +\infty, \\ 
W^-:=\max_x \{\lim_{r \to \bar{r}} \mathcal{G}(r,x)- x\} = c< +\infty.
\end{align*}
  so that $W^+-W^->8\pi$.

Since, from Lemma \ref{exactsymp}, the map $\cal{P}$ is exact symplectic w.r.t the form \\$\lambda =\frac{1}{4r^2}dr\wedge d\theta =d(-\frac{1}{4r})\wedge d\theta $  and $1/(4r)$ is Lipschitz for $r>\bar{r}$,  we can apply Theorem \ref{MatherTh} and Corollary \ref{MatherCor} to the Poincar\'e map restricted to the strip $\Sigma_{\bar{r}}$.
For every   
$\alpha>(c+2)/2\pi$, we get two functions $\phi,\eta:\RR\rightarrow\RR$ such that, for every $\xi\in\mathbb{R}$
  \begin{align}
    &\label{thcor1} \phi(\xi+2\pi) = \phi(\xi)+2\pi, \quad \eta(\xi+2\pi)=\eta(\xi) , \\
    &\label{thcor2} \mathcal{P} (\phi(\xi),\eta(\xi))=(\phi(\xi+2\pi\alpha),\eta(\xi+2\pi\alpha)).
  \end{align}
  
  For every $\xi\in\RR$, let us consider the solution of the Cauchy problem \eqref{cuatro} with initial condition $(r(0),\theta(0))=(\eta(\xi),\phi(\xi))$ and denote it $(r(t),\theta(t))_\xi$. By \eqref{thcor1} and uniqueness we have that
  \[
  (r(t),\theta(t))_{\xi+2\pi} =(r(t),\theta(t))_\xi+(0,2\pi), 
  \]
  and from \eqref{thcor2} and the definition of $\cal{P}$,
  \begin{equation*}
(r(t+1),\theta(t+1))_{\xi}=(r(t),\theta(t))_{\xi+2\pi\alpha}
  \end{equation*}
  so that conditions \eqref{contheo}-\eqref{contheo2} are satisfied.

  The function $\xi\mapsto\Phi_\xi(a,b)$ introduced in Remark \ref{rem2} has the same regularity of the functions $\phi,\eta$ that can have at most jump discontinuities. Moreover, from properties \eqref{thcor1},\eqref{thcor2}, if $\xi$ is a point of continuity, so are $\xi + 2\pi$ and $\xi+2\pi\alpha$.

Finally, these solutions have rotation number $\alpha$, actually,
\begin{align*}
  \lim_{t\to\infty}\frac{\theta_\xi(t)}{t} &= \lim_{k\to\infty}\frac{\theta_\xi(k)}{k} =  \lim_{k\to\infty}\frac{\theta_{\xi+2\pi k\alpha}(0)}{k} =   \lim_{k\to\infty}\frac{\phi(\xi+2\pi k\alpha)}{k} \\
  &= \lim_{k\to\infty}\frac{\phi(\xi+2\pi \{k\alpha\})+ 2\pi [k\alpha] }{k} =     2\pi\alpha.
  \end{align*}
  where $[x]$ denote the integer part of $x$ and $\{x\}=x-[x]$.

\section{Conclusions} \label{sec:conclu}

This paper can be seen as an example of the study of twist dynamics around a singularity in Hamiltonian systems. As a paradigmatic example we choose the point-vortex model. In suitable variables we applied a version of Aubry-Mather theory to get similar results as in the case of exact area-preserving maps of the annulus \cite{Mathertop}. 

We suppose that the origin was a zero of order $4$ for the perturbation. This condition played a role in the regularizing change of variable $\varphi$. For this reason, it seems unclear how to weaken this assumption. 

Our result leaves open the distinction between classical and generalized quasi-periodic solutions. This relies on the nature of the corresponding Mather set with irrational rotation number. Actually it can be either a invariant curve or a Cantor set. A possible future development of the present work  could be finding conditions that break invariant curves.

\section*{Acknowledgment}
We are grateful to Rafael Ortega for fruitful discussions and suggestions.

\appendix
\renewcommand{\thesection}{\Alph{section}}
\section{Appendix}
This appendix is intended to present some technical results that are required for the proof of the twist condition in Section \ref{sec:twist}. 

\begin{lemma} \label{lemder}
  Suppose the $p\in \mathcal{R}^{2}_{\varepsilon}$ and that the origin is a zero of order $N$. Consider the decomposition given in Definition \ref{deforder}
  \[
  p(t,x,y) = T_N(t,x,y) +\tilde{p}(t,x,y)
  \]
  and set the functions
  \[ x = x(r,\theta)= \frac{\cos \theta}{\sqrt{2r}},\quad y =y(r,\theta)= \frac{-\sin \theta}{\sqrt{2r}}
  \]
  defined for $  r > \frac{1}{2\varepsilon^2}$ and $\theta\in \TT$. Then, there exists a constant $C>0$ such that 
\begin{itemize}

  \item [\textbf{1)}] $r^{(N+1)/2}\left(\abs{\partial_{\theta}\, \tilde{p}(t,x,y)} + \abs{\partial_{\theta \theta}\, \tilde{p}(t,x,y)}\right) \leq  C$,
  \item [\textbf{2)}] $r^{(N+3)/2}\abs{\partial_{r\theta}\, \tilde{p}(t,x,y)}\leq C$,
  \item [\textbf{3)}] $r^{(N+2)/2}\left(\abs{\partial_{r}\, p(t,x,y)} + \abs{\partial_{r\theta}\, p(t,x,y)}\right)\leq C$,
   
  \item [\textbf{4)}] $r^{(N+4)/2}\abs{\partial_{rr}\, p(t,x,y)}\leq C$. \end{itemize}

\end{lemma}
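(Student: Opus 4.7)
The plan is to reduce each of the four inequalities to a direct application of the chain rule, once the scaling of the partial derivatives of the map $(r,\theta)\mapsto(x,y)$ has been recorded. Writing $x=\cos\theta\,(2r)^{-1/2}$ and $y=-\sin\theta\,(2r)^{-1/2}$, one immediately reads off $|x|+|y|\le C r^{-1/2}$, together with the bounds
\[
|\partial_\theta x|,|\partial_\theta y|,|\partial_{\theta\theta}x|,|\partial_{\theta\theta}y|\le Cr^{-1/2},\quad
|\partial_r x|,|\partial_r y|,|\partial_{r\theta}x|,|\partial_{r\theta}y|\le Cr^{-3/2},\quad
|\partial_{rr}x|,|\partial_{rr}y|\le Cr^{-5/2}.
\]
From Definition~\ref{deforder} applied with $k=2$ I will use $|\partial^{(m)}\tilde p(t,x,y)|\le C(|x|^{N-m+1}+|y|^{N-m+1})\le C'r^{-(N-m+1)/2}$ for $m=1,2$.

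For \textbf{1)} and \textbf{2)} I would expand $\partial_\theta\tilde p$, $\partial_{\theta\theta}\tilde p$, $\partial_{r\theta}\tilde p$ via the chain rule and bound each term. For instance $\partial_\theta\tilde p=\partial_x\tilde p\,\partial_\theta x+\partial_y\tilde p\,\partial_\theta y$ is controlled by $r^{-N/2}\cdot r^{-1/2}=r^{-(N+1)/2}$. The expression for $\partial_{\theta\theta}\tilde p$ splits into two types of terms, one with a second $\tilde p$-derivative contracted with $(\partial_\theta x)^2$ giving $r^{-(N-1)/2}\cdot r^{-1}$, and one with a first $\tilde p$-derivative times $\partial_{\theta\theta}x$ giving $r^{-N/2}\cdot r^{-1/2}$; both are $O(r^{-(N+1)/2})$. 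The mixed derivative $\partial_{r\theta}\tilde p$ contains terms of the form $\partial^{(2)}\tilde p\cdot\partial_r(\cdot)\,\partial_\theta(\cdot)\sim r^{-(N-1)/2}r^{-3/2}r^{-1/2}$ and $\partial\tilde p\cdot\partial_{r\theta}(\cdot)\sim r^{-N/2}r^{-3/2}$, both yielding $O(r^{-(N+3)/2})$.

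For \textbf{3)} and \textbf{4)} the key observation is the homogeneity of $T_N$: since $T_N(t,\lambda x,\lambda y)=\lambda^N T_N(t,x,y)$,
\[
T_N\!\bigl(t,x(r,\theta),y(r,\theta)\bigr)=(2r)^{-N/2}\,T_N(t,\cos\theta,-\sin\theta),
\]
so $\partial_r^jT_N$ acquires the exact scaling $r^{-(N+2j)/2}$, and $\partial_\theta$ preserves the scaling (since $T_N(t,\cos\theta,-\sin\theta)$ is a $\mathcal C^1$-in-$t$ trigonometric polynomial). This gives the sharp bounds $|\partial_r T_N|,|\partial_{r\theta}T_N|\le C r^{-(N+2)/2}$ and $|\partial_{rr}T_N|\le C r^{-(N+4)/2}$. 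The contributions of $\tilde p$ to these derivatives turn out to be of strictly smaller size ($r^{-(N+3)/2}$ and $r^{-(N+5)/2}$ respectively) by the same chain-rule bookkeeping as in the first part, so they are absorbed into the $T_N$-estimate. Summing the two contributions proves \textbf{3)} and \textbf{4)}.

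There is no real obstacle here: the argument is essentially bookkeeping, and the only subtle point is to make sure that each summand in the chain-rule expansion is estimated using the correct derivative order of $\tilde p$ (at most $m=2$, which is guaranteed by $p\in\mathcal R^2_\varepsilon$) and the correct power of $r^{-1/2}$ coming from the geometric factors $\partial_r^a\partial_\theta^b x,\partial_r^a\partial_\theta^b y$. The homogeneity of $T_N$ explains why the estimates in \textbf{3)}, \textbf{4)} are one power of $r^{1/2}$ weaker than the corresponding ones for $\tilde p$ in \textbf{1)}, \textbf{2)}.
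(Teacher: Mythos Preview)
Your proposal is correct and follows essentially the same approach as the paper: the paper explicitly carries out the chain-rule computation for \textbf{1)} (expanding $\partial_\theta\tilde p$ and $\partial_{\theta\theta}\tilde p$ and inserting the bounds $|\partial^{(m)}\tilde p|\le C(|x|^{N-m+1}+|y|^{N-m+1})$) and then simply remarks that \textbf{2)}, \textbf{3)}, \textbf{4)} are obtained by similar computations. Your explicit use of the homogeneity $T_N(t,x(r,\theta),y(r,\theta))=(2r)^{-N/2}T_N(t,\cos\theta,-\sin\theta)$ to isolate the leading contribution in \textbf{3)} and \textbf{4)} is a clean way to organize those cases, but it is not a different method---the paper's ``similar'' would amount to applying the chain rule directly to $p$ with $|\partial^{(m)}p|\le C(|x|^{N-m}+|y|^{N-m})$, which yields the same powers.
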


\begin{proof}

To get the estimate \textbf{1)}, let's compute explicitly the derivatives with respect to $\theta$:
\[ \partial_{\theta} \tilde{p}(t,x(r,\theta),y(r,\theta)) = -\frac{1}{(2r)^{1/2}}\left[\sin\theta \, \partial_{x} \tilde{p}(t,x,y) + \cos\theta \, \partial_{y} \tilde{p}(t,x,y) \right] \]
and 
\begin{align*}
\partial_{\theta\theta} & \tilde{p}(t,x(r,\theta),y(r,\theta)) =  \frac{1}{(2r)^{1/2}}\left[\sin\theta \, \partial_{y} \tilde{p}(t,x,y) - \cos\theta \, \partial_{x} \tilde{p}(t,x,y) \right]
\\ & + \frac{1}{(2r)}\left[\sin^2\theta \, \partial_{xx} \tilde{p}(t,x,y) + 2\cos\theta  \sin\theta \,\partial_{xy} \tilde{p}(t,x,y) + \cos^2\theta \,\partial_{yy} \tilde{p}(t,x,y) \right].
\end{align*} 
From the definition of zero of order $N$ we have: 
\begin{align*}
  \abs{\partial_{\theta}\, \tilde{p}(t,x,y)} & + \abs{\partial_{\theta \theta}\, \tilde{p}(t,x,y)}  \leq  \frac{C_1}{r^{1/2}} \left(\abs{\partial_x \, \tilde{p}(t,x,y)} + \abs{\partial_y \, \tilde{p}(t,x,y)}\right) \\& + \frac{C_2}{r} \left(\abs{\partial_{xx} \, \tilde{p}(t,x,y)}  + \abs{\partial_{xy} \, \tilde{p}(t,x,y)} + \abs{\partial_{yy} \, \tilde{p}(t,x,y)}\right) \\& \leq \frac{C_1}{r^{1/2}}(|x|^N+|y|^N)+\frac{C_1}{r}(|x|^{N-1}+|y|^{N-1})
  \\& \leq \frac{C_1}{r^{(N+1)/2}} + \frac{C_2}{r^{1+(N-1)/2}} \leq  \frac{C}{r^{(N+1)/2}}
\end{align*}

To obtain \textbf{2)}, \textbf{3)} and \textbf{4)}, the computations are similar. 
%
%
%
%
%
\end{proof}

\vspace{0.5cm}
The following result is a lemma of Riemann-Lebesgue type.

\begin{lemma} \label{RL}
Let $q(t,\eta,\xi)$ be a polynomial of degree $N$,
\[q(t,\eta,\xi) = \sum_{j+h \leq N} \alpha_{j,h}(t) \eta^j \xi^h\]
 with $\alpha_{j,h}(t)\in \mathcal{C}^1 (\RR/\mathbb{Z})$.
 Assume in addition that for each $t$
 \begin{equation}{\label{izero}}
 \int_0^{2\pi} q (t, \cos \, \theta, \sin \, \theta)\, \mathrm{d}\theta = 0 \, .
 \end{equation}
 Let $\beta\in \mathcal{C}^1 ([0,\tau])$ with $[0,\tau] \subset [0,1]$ and $\varphi\in \mathcal{C}^{\infty}([0,1])$. Then there exists $C_{RL}>0$ such that
 \[\abs{\int_0^{t} q (s, \cos \,(\lambda s + \beta(s)), \sin \, (\lambda s + \beta(s)))\,\varphi(s) \, \mathrm{d}s} \leq \frac{C_{RL} }{\abs{\lambda}}\]
 if $t\in [0,\tau]$ and $\lambda \in \mathbb{R}\setminus \left\lbrace 0 \right\rbrace$. Moreover, the constant $C_{RL}$ depends upon $N$, $\max_{j,h} \left[\norm{\alpha_{j,h}}_{\infty} +  \norm{\dot{\alpha}_{j,h}}_{\infty} \right]$, $\norm{\beta}_{\infty}$, $\Vert \dot{\beta} \Vert_{\infty}$ $\norm{\varphi}_{\infty}$ and $\Vert \dot{\varphi} \Vert_{\infty}$.
\end{lemma}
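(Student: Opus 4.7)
The plan is to reduce the oscillatory integral to finitely many one-dimensional oscillatory integrals of the form $\int_0^t a(s) e^{ik\lambda s}\, ds$ with $k\neq 0$, and then bound each of them by integration by parts against the rapidly oscillating factor $e^{ik\lambda s}$. This is the standard Riemann--Lebesgue mechanism upgraded to a quantitative estimate.

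First I would expand the polynomial along the unit circle in a finite Fourier series. Using $\cos\theta = (e^{i\theta}+e^{-i\theta})/2$ and $\sin\theta = (e^{i\theta}-e^{-i\theta})/(2i)$ inside each monomial $\eta^j \xi^h$ with $j+h\leq N$, one obtains
\[
q(t, \cos\theta, \sin\theta) = \sum_{|k|\leq N} c_k(t)\, e^{ik\theta},
\]
where each $c_k(t)$ is a finite linear combination of the $\alpha_{j,h}(t)$ with numerical coefficients depending only on $N$. In particular, $c_k \in \mathcal{C}^1(\RR/\ZZ)$ and $\norm{c_k}_{\infty}+\norm{\dot{c}_k}_{\infty}$ is controlled by $N$ and $\max_{j,h}(\norm{\alpha_{j,h}}_{\infty}+\norm{\dot\alpha_{j,h}}_{\infty})$. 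The mean-zero hypothesis \eqref{izero} forces $c_0(t)\equiv 0$, which is what allows us to integrate by parts without leaving a non-oscillating term behind.

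Second, substituting $\theta = \lambda s + \beta(s)$ and multiplying by $\varphi(s)$ turns the integral into the finite sum
\[
\int_0^t q(s,\cos(\lambda s+\beta(s)),\sin(\lambda s+\beta(s)))\,\varphi(s)\,\mathrm{d}s = \sum_{0<|k|\leq N} \int_0^t c_k(s)\,e^{ik\beta(s)}\,\varphi(s)\,e^{ik\lambda s}\,\mathrm{d}s,
\]
so it is enough to bound each summand by $C/|\lambda|$ with $C$ of the stated form.

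Third, for each fixed $k\neq 0$, I would write $e^{ik\lambda s} = \frac{1}{ik\lambda}\frac{d}{ds}e^{ik\lambda s}$ and integrate by parts. The boundary term is dominated by $\frac{2}{|k\lambda|}\norm{c_k}_{\infty}\norm{\varphi}_{\infty}$. The remaining integral involves
\[
\frac{d}{ds}\bigl[c_k(s)\,e^{ik\beta(s)}\,\varphi(s)\bigr] = \dot{c}_k\, e^{ik\beta}\varphi + ik\dot\beta\, c_k e^{ik\beta}\varphi + c_k\, e^{ik\beta}\,\dot\varphi,
\]
and its absolute value is uniformly bounded on $[0,1]$ by a constant that depends exactly on $\norm{c_k}_{\infty}, \norm{\dot{c}_k}_{\infty}, \norm{\beta}_{\infty}, \norm{\dot\beta}_{\infty}, \norm{\varphi}_{\infty}, \norm{\dot\varphi}_{\infty}$ and on $|k|\leq N$; after division by $|k\lambda|$ the factor $|k|$ coming from $ik\dot\beta$ cancels the $|k|$ in the denominator, so the whole summand is bounded by $C/|\lambda|$ with $C$ of the required form. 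Summing over the finitely many $k$ with $0<|k|\leq N$ yields the desired constant $C_{RL}$.

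No step poses a genuine obstacle; the only care needed is bookkeeping to verify that $C_{RL}$ depends only on the quantities listed in the statement (i.e.\ that no hidden dependence on $\lambda$, $t$, or $\tau$ sneaks in), which follows because all the estimates above are uniform on $[0,1]$ and the Fourier decomposition is purely algebraic in $q$.
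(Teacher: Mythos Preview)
Your proposal is correct and follows essentially the same route as the paper: finite Fourier expansion of $q(t,\cos\theta,\sin\theta)$, use of \eqref{izero} to kill the $k=0$ term, and then integration by parts against $e^{ik\lambda s}$ for each $0<|k|\le N$. Your bookkeeping on the constant $C_{RL}$ is, if anything, slightly more explicit than the paper's.
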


\vspace{0.3cm}

\begin{proof}
The function $q (t, \cos \, \theta, \sin \, \theta)$ has a finite Fourier expansion with \\respect to $\theta$, say
\[q (t, \cos \, \theta, \sin \, \theta) = \sum_{\abs{k}\leq N} q_k (t) e^{ik\theta}. \]
The coefficients $q_k$ can be expressed in terms of the functions $\alpha_{j,h}$ and belong to $\mathcal{C}^1 (\RR/\mathbb{Z})$,
\[q_k (t) = \frac{1}{2\pi}  \int_0^{2\pi} q (t, \cos \, \theta, \sin \, \theta) e^{-ik\theta} \, \mathrm{d}\theta.\]
The condition \eqref{izero} implies that $q_0 (t)$ vanishes everywhere and so the integral $I(t)$ we want to estimate can be expressed as the sum
\[I(t) = \sum_{0 < \abs{k} \leq N} I_k (t) \:\quad \textnormal{with} \:\quad I_k (t) = \int_0^{t} q_k (s) e^{ik\beta(s)} e^{ik\lambda s} \,\varphi(s) \, \mathrm{d}s. \]
Since we have excluded $k = 0$ these integrals can be estimated by a standard procedure in the theory of oscillatory integrals, see for instance \cite{Arnsing}. After integrating by parts
\begin{align*}
I_k (t) &= \frac{1}{ik\lambda} \left[ q_k (t) e^{ik\beta (t)} e^{ik\lambda t}\,\varphi(t) -  q_k (0) e^{ik\beta (0)}\,\varphi(0) \right.
\\ &  \left.
-  \int_0^{t} \left(q_k (s) e^{ik\beta(s)}\,\varphi(s)\right)^{\prime} e^{ik\lambda s} \, \mathrm{d}s  \right].
\end{align*}
Therefore, 
\[\abs{I_k (t)} \leq \frac{C_k}{\abs{k} \abs{\lambda}}\]
with 
\[C_k = \norm{q_k}_{\infty} \left[2\norm{\varphi}_{\infty}  + \abs{k}\norm{\dot{\beta}}_{\infty}\,\norm{\varphi}_{\infty} + \norm{\dot{\varphi}}_{\infty}\right] + \norm{\dot{q}_k}_{\infty}\,\norm{\varphi}_{\infty} . \]

\end{proof}


Finally, we state the following lemma  concerning the uniform convergence of the solution of a linear ODE whose time-dependent coefficients are bounded and converging weak* in $\mathcal{L}^\infty$. Similar results can be found in \cite{Zhang} and \cite{Ortega92}. For the proof we will follow the lines of the proof of Lemma 2.1 in \cite{Ortega92}.

Consider the following linear system depending on the parameters \\$(r,\theta)\in (a,b)\times\TT$, $a>b$
\begin{equation} \label{sysweak}
\left\lbrace \begin{array}{l} \dot{\mathit{Y}}= \left( A + M (t;r,\theta)\right)\mathit{Y},  \\  \mathit{Y}(0)= \mathbb{I}_2 , \end{array} \right.
\end{equation}

where $A$, $M$ are $2\times 2$ matrices, $A$ is constant and $M\in \mathcal{C}^1([0,1]\times (a,b)\times\TT)$. We denote the matrix solution of this system as $\mathcal{Y} (t;r,\theta)$. 

\begin{lemma}\label{lemconv}

Suppose that the family $\left\lbrace M(t;r, \theta)\right\rbrace$ is uniformly bounded in $\mathcal{L}^\infty([0,1])$ and that $M(t;r,\theta)$ converges to $\tilde{M}(t;\theta)\in \mathcal{L}^\infty([0,1])$ in the weak* sense as $r\to b$. Then
\[ \mathcal{Y} (t;r,\theta)
\underset{r\to b}{\longrightarrow}
\tilde{\mathcal{Y}}(t;\theta) \quad  \text{uniformly,} \qquad t\in [0,1] \quad \forall \theta\in\TT\]
where $\tilde{\mathcal{Y}} (t;\eta)$ is the matrix solution of the problem 
\begin{equation} \label{sysweakn}
\left\lbrace \begin{array}{l} \dot{\mathit{Y}}= \left(A + \tilde{M} (t; \theta)\right)\mathit{Y} , \\  \mathit{Y}(0)=\mathbb{I}_2\ . \end{array} \right.
\end{equation}

\end{lemma}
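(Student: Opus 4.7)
The plan is to identify $\tilde{\mathcal Y}$ as the unique limit of any convergent subsequence of $\{\mathcal Y(\cdot;r,\theta)\}$ as $r\to b$, combining an Arzel\`a--Ascoli compactness argument with the weak* convergence hypothesis. Throughout, $\theta$ is treated as a fixed parameter.

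I would first rewrite \eqref{sysweak} as the integral equation
\[
\mathcal Y(t;r,\theta) = \mathbb{I}_2 + \int_0^t \bigl(A + M(s;r,\theta)\bigr)\mathcal Y(s;r,\theta)\,\mathrm{d}s,
\]
and use the uniform bound $\|M(\cdot;r,\theta)\|_{\mathcal L^\infty}\le C$ together with Gronwall's inequality to obtain a uniform estimate $\|\mathcal Y(t;r,\theta)\|\le e^{\|A\|+C}$ on $[0,1]$. Feeding this back into the ODE yields a uniform Lipschitz constant for $t\mapsto \mathcal Y(t;r,\theta)$, so the family $\{\mathcal Y(\cdot;r,\theta)\}$ is uniformly bounded and equicontinuous on $[0,1]$. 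By Arzel\`a--Ascoli, from any sequence $r_k\to b$ I can extract a subsequence along which $\mathcal Y(\cdot;r_k,\theta)$ converges uniformly on $[0,1]$ to some limit $Y^*$.

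The heart of the argument is to show that $Y^*$ solves the integral form of \eqref{sysweakn}, so that uniqueness forces $Y^*=\tilde{\mathcal Y}(\cdot;\theta)$. The $A$-part of the integrand passes to the limit by uniform convergence of $\mathcal Y$, so the only delicate point is to verify
\[
\int_0^t M(s;r_k,\theta)\,\mathcal Y(s;r_k,\theta)\,\mathrm{d}s \;\longrightarrow\; \int_0^t \tilde M(s;\theta)\,Y^*(s)\,\mathrm{d}s
\]
uniformly in $t\in[0,1]$. I would split this difference as
\[
\int_0^t M(s;r_k)\bigl[\mathcal Y(s;r_k)-Y^*(s)\bigr]\mathrm{d}s + \int_0^t \bigl(M(s;r_k)-\tilde M(s)\bigr)Y^*(s)\,\mathrm{d}s.
\]
The first term vanishes uniformly in $t$ thanks to the $\mathcal L^\infty$-bound on $M$ and the uniform convergence $\mathcal Y(\cdot;r_k)\to Y^*$. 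For the second term, since $Y^*\in\mathcal C([0,1])\subset \mathcal L^1([0,1])$, the weak* hypothesis gives pointwise convergence to $0$ for each fixed $t$; this is where the assumption is genuinely used.

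The main obstacle is upgrading this pointwise convergence in $t$ into a uniform one. To do so I would observe that the functions
\[
F_k(t):=\int_0^t \bigl(M(s;r_k,\theta)-\tilde M(s;\theta)\bigr)Y^*(s)\,\mathrm{d}s
\]
are equi-Lipschitz in $t$, with constant bounded by $2C\|Y^*\|_\infty$; an equicontinuous family on the compact interval $[0,1]$ that converges pointwise necessarily converges uniformly. This yields the integral identity for $Y^*$, and because the Carath\'eodory linear system \eqref{sysweakn} has a unique solution, $Y^*=\tilde{\mathcal Y}(\cdot;\theta)$. Since every sequence $r_k\to b$ admits a subsequence converging to the same limit, a standard subsequence argument concludes that the entire family $\mathcal Y(\cdot;r,\theta)$ converges uniformly to $\tilde{\mathcal Y}(\cdot;\theta)$ on $[0,1]$ as $r\to b$.
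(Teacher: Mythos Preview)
Your proposal is correct and follows essentially the same route as the paper: Gronwall gives uniform bounds, Ascoli--Arzel\`a produces a uniformly convergent subsequence, the integral is split exactly as you do, weak* convergence handles the $M\,Y^*$ term, and uniqueness of the limiting linear problem identifies the limit. Two minor remarks: your equi-Lipschitz upgrade of the $F_k$ to uniform convergence is harmless but unnecessary, since identifying $Y^*$ only requires the integral identity for each fixed $t$; and the paper applies Ascoli--Arzel\`a on $[0,1]\times\mathbb{T}$ to get uniformity in $\theta$ as well, whereas you fix $\theta$---an easy adjustment if the joint uniformity is needed downstream.
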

\begin{proof}
 The solution of \eqref{sysweak} can be written as: 
\begin{equation}\label{fdr}
\mathcal{Y} (t;r,\theta)= \mathbb{I}_2  + \int_0^t  (A + M (s; r,\theta))\mathcal{Y} (s;r,\theta)  \, \mathrm{d}s, \quad t\in\RR,
\end{equation}
from which we get the following estimate on the matrix norm
\begin{equation}
\Vert \mathcal{Y} (t;r,\theta)\Vert \leq 1  + \int_0^t  \Vert A + M (s;r, \theta) \Vert \, \Vert\mathcal{Y} (s;r,\theta)\Vert  \, \mathrm{d}s.
\end{equation}
Gronwall lemma applied on the interval [0,1] gives us
\[ 
\Vert \mathcal{Y} (t;r,\theta) \Vert \leq 1  + \int_0^t  \Vert A + M (s;r, \theta)\Vert \, \mathrm{e}^{\int_s^t  \Vert A + M (\xi;r, \theta)\Vert \, \mathrm{d}\xi}  \, \mathrm{d}s, \quad t\in [0,1]. \]

Since $\{M(t;r,\theta)\}$ is uniformly bounded, $ \Vert \mathcal{Y} (t;r,\theta) \Vert$ is uniformly bounded. Moreover, from \eqref{fdr} also $ \Vert \dot{\mathcal{Y}} (t;r,\theta) \Vert$ is uniformly bounded. Hence, we can apply Ascoli-Arzel\`a theorem to get a sub-sequence $r_k\to b$ as $k\to\infty$ and a matrix $\Phi \in \mathcal{C}^{1} ([0,1]\times\TT)$ such that
\[
\mathcal{Y} (t;r_k,\theta) \underset{k\to\infty}{\longrightarrow} \Phi(t;\theta),  \quad  \text{uniformly in} \qquad t\in [0,1] \quad  \theta\in\TT. \]

The matrices $\mathcal{Y} (t;r_k,\theta)$ satisfies
\begin{align*}
\mathcal{Y} (t;r_k,\theta) &= \mathbb{I}_2  + \int_0^t  (A + M  (s;r_k, \theta))\mathcal{Y} (s;r_k,\theta) \, \mathrm{d}s  \\ & = \mathbb{I}_2  + \int_0^t  A \mathcal{Y} (s;r_k,\theta) \, \mathrm{d}s  + \int_0^t  M  (s; r_k, \eta)\Phi(s;\theta) \, \mathrm{d}s  \\ & + \int_0^t  M  (s;r_k, \theta)(\mathcal{Y} (s;r_k,\theta)- \Phi(s;\theta)) \, \mathrm{d}s.
\end{align*}
Using the uniform convergence, and the weak* convergence of $M(t;r,\theta)$ (recall $\Phi\in  \mathcal{L}^1 ([0,1])$), we have the limit: 
\[ \lim_{k\to\infty} \mathcal{Y} (t;r_k,\theta) = \mathbb{I}_2  \,+ \int_0^t  A \Phi(s;\eta) \, \mathrm{d}s \, + \int_0^t  \tilde{M} (s; \theta)\Phi(s;\theta) \, \mathrm{d}s,  \quad t\in [0,1] \quad \forall \theta\in\TT. \]

Finally, by uniqueness we observe that this limit is the solution of system \eqref{sysweakn} and we obtain  
\[ \lim_{r\to b} \mathcal{Y} (t;r,\theta)=\Phi(t;\theta)=\tilde{\mathcal{Y}}(t;\theta) \qquad t\in [0,1] \quad \forall \theta\in\TT.\]
\end{proof}

\end{document}